\documentclass[12pt]{article}
\usepackage{amsmath,amssymb,amscd,amsthm,esint}
\usepackage{graphics,amsmath,amssymb,amsthm,mathrsfs}
\usepackage{amssymb,amsfonts}
\usepackage[all,arc]{xy}
\usepackage{enumerate}
\usepackage{mathrsfs}
\usepackage{amsmath,cite,amsthm}

\oddsidemargin 3mm
\evensidemargin 3mm
\headheight=18pt
\setlength{\topmargin}{-0.3cm}
\textheight 22cm
\textwidth 16cm

\bibliographystyle{amsplain}

\newtheorem{theorem}{Theorem}[section]
\newtheorem{prop}[theorem]{Proposition}
\newtheorem{lemma}[theorem]{Lemma}

\newtheorem{remark}[theorem]{Remark}

\newcommand{\average}{-\!\!\!\!\!\!\int}

\begin{document}

\title
{\bf Convergence Rates of Neumann problems for Stokes Systems}

\author{Shu Gu\footnote{Supported in part by NSF grant DMS-1161154}}

\date{ }

\maketitle

\begin{abstract}
In quantitative homogenization of the Neumann problems for Stokes systems with rapidly oscillating periodic coefficients, this paper studies the convergence rates of the velocity in $L^2$ and $H^1$ as well as those of the pressure term in $L^2$ , without any smoothness assumptions on the coefficients.


\end{abstract}

\textbf{Keywords.} Convergence rates; Stokes systems; Homogenization; Neumann problems.


\medskip


\section{Introduction and Main Results}
\setcounter{equation}{0}

\quad\  In this paper, we would like to investigate the convergence rates of Neumann problems for Stokes systems with rapidly oscillating periodic coefficients. Specifically, we'd like to consider the following Neumann problem for Stokes system in a bounded domain $\Omega\subset \mathbb{R}^d$,
\begin{equation}\label{def.Stokes.Neumann}
\left\{
\begin{aligned}
\mathcal{L}_\varepsilon(u_\varepsilon)+\nabla p_\varepsilon &= F &\qquad \text{ in }\Omega,\\
\text{div}(u_\varepsilon) &=g &\qquad\text{ in }\Omega,\\
\frac{\partial u_\varepsilon}{\partial \nu_\varepsilon}-p_\varepsilon\, n & =f &\qquad\text{ on }\partial\Omega,
\end{aligned}
\right.	
\end{equation}
where $n$ denotes the outward unit normal to $\partial \Omega$. Throughout this paper, we use the summation convention and let $\varepsilon >0$ be a small parameter. We define the second-order elliptic operator in divergence form $\mathcal{L}_\varepsilon$ associated with coefficient matrix $A$ by
\begin{equation}\label{def.operator}
\mathcal{L}_\varepsilon= -\text{div}(A(x/\varepsilon)\nabla)=-\frac{\partial}{\partial x_i}\bigg[a_{ij}^{\alpha\beta}\big(\frac{x}{\varepsilon}\big)\frac{\partial}{\partial x_j}\bigg]
\end{equation}
where $1\le i,j,\alpha,\beta \le d$, and the conormal derivative of system (\ref{def.Stokes.Neumann}) on $\partial\Omega$ is defined by
\begin{equation}\label{def.conormal}
\left(\frac{\partial u_\varepsilon}{\partial \nu_\varepsilon}\right)^\alpha-p_\varepsilon\, n_\alpha=n_i(x)a_{ij}^{\alpha\beta}(x/\varepsilon)\frac{\partial u_\varepsilon^\beta}{\partial x_j}-p_\varepsilon(x)n_\alpha(x).
\end{equation}
We assume that the coefficient matrix $A(y)=(a_{ij}^{\alpha\beta}(y))$ is real, bounded measurable, and it satisfies the ellipticity condition:
\begin{equation}\label{cond.ellipticity}
\mu |\xi|^2 \le a_{ij}^{\alpha\beta}(y)\xi_i^\alpha\xi_j^\beta \le \frac{1}{\mu} |\xi|^2 \qquad \text{for }y\in \mathbb{R}^d \text{ and }\xi=(\xi_i^\alpha) \in \mathbb{R}^{d\times d}, 	
\end{equation} 
where $\mu>0$, and also the periodicity condition,
\begin{equation}\label{cond.periodicity}
A(y+z)=A(y) \qquad \text{ for }y\in \mathbb{R}^d\text{ and }z\in \mathbb{Z}^d.
\end{equation}
A function satisfying (\ref{cond.periodicity}) will be called 1-periodic.

The homogenization theory of Neumann problems for Stokes systems tells us that, $u_\varepsilon-\average_\Omega u_\varepsilon$ converges to  $u_0-\average_\Omega u_0$ weakly in $H^1$, and $p_\varepsilon-\average_\Omega p_\varepsilon$ converges to $p_0-\average_\Omega p_0$ weakly in $L^2$, given suitable $F$, $f$ and $g$.  Here $(u_0, p_0)\in H^1(\Omega;\mathbb{R}^d)\times L^2(\Omega)$ is the weak solution of the associated homogenized problem with constant coefficients,
\begin{equation}\label{def.Stokes0.Neumann}
\left\{
\begin{aligned}
\mathcal{L}_0(u_0)+\nabla p_0 &= F &\qquad \text{ in }\Omega,\\
\text{div}(u_0) &=g &\qquad\text{ in }\Omega,\\
\frac{\partial u_0}{\partial \nu_0}-p_0\, n & =f &\qquad\text{ on }\partial\Omega.
\end{aligned}
\right.	
\end{equation}
The nature and primary question will be how fast does it converge. Our main purpose is to study the optimal convergence rate of $\|u_\varepsilon-u_0\|_{L^2(\Omega)}$, as $\varepsilon \rightarrow 0$. The result is given in the following theorem.
\begin{theorem}\label{thm.convergence-L2.Neumann}
Let $\Omega$ be a bounded $C^{1,1}$ domain. Suppose $A$ satisfies ellipticity condition (\ref{cond.ellipticity}) and periodicity condition (\ref{cond.periodicity}). Given $F\in L^2(\Omega;\mathbb{R}^d)$ and $f\in H^{1/2}(\partial\Omega;\mathbb{R}^d)$ satisfying the compatibility condition
\begin{equation}\label{cond.compatibility.Neumann}
\int_\Omega F +\int_{\partial \Omega}f =0,
\end{equation}
for any $g\in H^1(\Omega)$, let $(u_\varepsilon,p_\varepsilon)$, $(u_0,p_0)$ be the weak solutions of Neumann problems (\ref{def.Stokes.Neumann}), (\ref{def.Stokes0.Neumann}), respectively. If $\int_\Omega u_\varepsilon=\int_\Omega u_0=0$, then
\begin{equation}\label{ineq.convergence-L2.Neumann}
\|u_\varepsilon-u_0\|_{L^2(\Omega)} \le C\varepsilon\|u_0\|_{H^2(\Omega)},
\end{equation}
where the constant $C$ depends only on $\mu$, $d$, and $\Omega$.
\end{theorem}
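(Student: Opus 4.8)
The plan is to follow the standard two-scale expansion strategy adapted to the Stokes system with a Neumann boundary condition. First I would introduce the corrector $\chi = (\chi_j^{\gamma\beta})$ for the elliptic operator $\mathcal{L}_\varepsilon$ together with the corrector $\pi$ for the pressure (these solve the cell problem on the torus $\mathbb{Y} = \mathbb{R}^d/\mathbb{Z}^d$ with $\mathcal{L}_1(\chi_j^\beta + P_j^\beta) + \nabla \pi_j^\beta = 0$, $\mathrm{div}(\chi_j^\beta) = 0$), and define the first-order approximation
\begin{equation*}
w_\varepsilon(x) = u_\varepsilon(x) - u_0(x) - \varepsilon \chi_j^\beta(x/\varepsilon)\,\frac{\partial u_0^\beta}{\partial x_j}(x),
\end{equation*}
together with the corresponding pressure defect $\phi_\varepsilon = p_\varepsilon - p_0 - \pi_j^\beta(x/\varepsilon)\,\partial_j u_0^\beta$ (up to a constant fixed by the mean-zero normalization). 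The point is to compute $\mathcal{L}_\varepsilon(w_\varepsilon) + \nabla \phi_\varepsilon$ and the resulting conormal flux on $\partial\Omega$. As is well known, the divergence-form structure lets one rewrite the error in the flux using the flux correctors: one introduces $b_{ij}^{\alpha\beta}(y) = \widehat{a}_{ij}^{\alpha\beta} - a_{ij}^{\alpha\beta}(y) - a_{ik}^{\alpha\gamma}(y)\,\partial_k \chi_j^{\gamma\beta}(y) + \delta$-terms coming from the pressure corrector $\pi$, show each $b_{ij}^{\alpha\beta}$ has mean zero and is divergence free in a suitable sense, and hence admits a skew-symmetric potential $E_{kij}^{\alpha\beta} \in H^1_{\mathrm{per}}(\mathbb{Y})$ with $b_{ij}^{\alpha\beta} = \partial_k E_{kij}^{\alpha\beta}$, $E_{kij}^{\alpha\beta} = -E_{ikj}^{\alpha\beta}$. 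Using this skew-symmetry one rewrites $\mathcal{L}_\varepsilon(w_\varepsilon) + \nabla \phi_\varepsilon$ as a sum of terms each carrying an explicit factor of $\varepsilon$ and each in divergence form, so that $w_\varepsilon$ solves a Stokes system of the form $\mathcal{L}_\varepsilon(w_\varepsilon) + \nabla \phi_\varepsilon = \varepsilon\,\mathrm{div}(H_\varepsilon)$ in $\Omega$, $\mathrm{div}(w_\varepsilon) = -\varepsilon\,\partial_j(\chi_j^\beta(x/\varepsilon))\,\partial\cdots$ which after using $\mathrm{div}(\chi_j^\beta) = 0$ reduces to a term of size $\varepsilon$, with a conormal boundary datum on $\partial\Omega$ that is likewise $O(\varepsilon)$ in the appropriate trace norm.

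Next I would estimate $w_\varepsilon$ in energy norm. The Neumann problem for the Stokes system is well posed (up to the kernel of rigid-type motions / constants, which is killed here by the mean-zero conditions on $u_\varepsilon$, $u_0$ and on the pressures), so the a priori estimate gives $\|w_\varepsilon\|_{H^1(\Omega)} + \|\phi_\varepsilon\|_{L^2(\Omega)} \le C\varepsilon\|u_0\|_{H^2(\Omega)}$ once the right-hand side, the divergence constraint, and the boundary term are all controlled by $C\varepsilon\|u_0\|_{H^2(\Omega)}$; this uses boundedness of $\chi$, $\pi$, $E$ in $L^\infty$ or at least $L^2$ of the cell, $C^{1,1}$-regularity of $\Omega$ to handle the trace terms, and an $H^2$ bound on $u_0$ coming from the regularity theory for the constant-coefficient homogenized Neumann problem. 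This is the $H^1$ convergence rate for $u_\varepsilon - u_0$ after subtracting the oscillating corrector.

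To pass from the $H^1$ estimate on $w_\varepsilon$ to the sharp $L^2$ rate on $u_\varepsilon - u_0$ I would use a duality (Aubin–Nitsche type) argument. Given $\Phi \in L^2(\Omega;\mathbb{R}^d)$, solve the adjoint homogenized Neumann problem $\mathcal{L}_0^*(v_0) + \nabla q_0 = \Phi$ with the compatible zero-mean normalization, note $\|v_0\|_{H^2} + \|q_0\|_{H^1} \le C\|\Phi\|_{L^2}$ by the $C^{1,1}$ regularity theory, pair $\Phi$ with $u_\varepsilon - u_0$, and insert the two-scale expansion for both the primal and the dual solutions; the cross terms involving $\varepsilon\chi(x/\varepsilon)\nabla u_0$ are directly $O(\varepsilon)\|u_0\|_{H^2}\|\Phi\|_{L^2}$ in $L^2$, and the remaining bilinear term is handled by integrating by parts, using the equations for $w_\varepsilon$ and its dual analogue, the skew-symmetry of the flux correctors again, and the already-established $H^1$ bound on $w_\varepsilon$. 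Taking the supremum over $\|\Phi\|_{L^2} \le 1$ yields $\|u_\varepsilon - u_0\|_{L^2(\Omega)} \le C\varepsilon\|u_0\|_{H^2(\Omega)}$.

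I expect the main obstacle to be the bookkeeping forced by the pressure: unlike the scalar or elliptic-system case, here the flux corrector must be built so as to simultaneously absorb the contribution of the pressure corrector $\pi(x/\varepsilon)$, the divergence constraint $\mathrm{div}(u_\varepsilon) = g$ is inhomogeneous and must be matched at each scale, and the conormal derivative in (\ref{def.conormal}) couples $\nabla u_\varepsilon$ with $p_\varepsilon n$, so the boundary term in the error equation is not simply the flux corrector but a combination that has to be shown $O(\varepsilon)$ in $H^{-1/2}(\partial\Omega)$ (or handled via an interior cutoff plus boundary-layer correction if no higher smoothness of $A$ is assumed). Keeping the constant normalizations of $p_\varepsilon$, $p_0$, and $\phi_\varepsilon$ consistent throughout, and ensuring the duality argument's adjoint problem inherits exactly the same structural identities, is where the real care is needed; everything else is the now-standard machinery, requiring only $L^2_{\mathrm{loc}}$ bounds on the correctors and no smoothness of $A$.
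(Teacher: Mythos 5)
Your outline has the right global shape (two-scale expansion, flux correctors with a skew-symmetric potential, then duality), but two steps as written would fail under the paper's hypotheses.

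First, you work with the unsmoothed corrector $\varepsilon\chi_j^\beta(x/\varepsilon)\,\partial_j u_0^\beta$ and claim that only ``$L^2_{\mathrm{loc}}$ bounds on the correctors and no smoothness of $A$'' are needed. For bounded measurable $A$ the correctors are only in $H^1_{\mathrm{per}}(Y)$, not in $L^\infty$, and the error equation for $w_\varepsilon$ contains products such as $\chi^\varepsilon\,\nabla^2 u_0$ and $(\nabla\chi)^\varepsilon\,\nabla u_0$ with $\nabla u_0$ merely in $H^1(\Omega)$; these cannot be placed in $L^2(\Omega)$ with the right $\varepsilon$-scaling from an $L^2(Y)$ bound on $\chi$ alone. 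This is exactly the obstruction the paper circumvents by replacing $\nabla u_0$ with $S_\varepsilon(\nabla\widetilde u_0)$, where $S_\varepsilon$ is the Steklov smoothing operator: the inequality $\|f^\varepsilon S_\varepsilon u\|_{L^2(\mathbb{R}^d)}\le\|f\|_{L^2(Y)}\|u\|_{L^2(\mathbb{R}^d)}$ is what makes every such product controllable. Without either this smoothing or H\"older continuity of $A$ (the Kenig--Lin--Shen setting), your energy estimate does not close.

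Second, the asserted bound $\|w_\varepsilon\|_{H^1(\Omega)}\le C\varepsilon\|u_0\|_{H^2(\Omega)}$ is false in general: the oscillating conormal datum of $w_\varepsilon$ on $\partial\Omega$ (the terms $n_i\,b_{ij}^{\alpha\beta}(x/\varepsilon)\,\partial_j u_0^\beta$ together with the pressure contribution) is only $O(1)$ and gains merely a factor $\varepsilon^{1/2}$ from the thinness of the boundary layer, so the correct $H^1$ rate is $O(\sqrt{\varepsilon})$ (this is the content of Theorem 1.2 of the paper). If your $O(\varepsilon)$ energy bound were true, the duality step would be superfluous. Since it is not, the duality argument has to do real work: the paper's mechanism is the intermediate estimate
$|a_\varepsilon(v_\varepsilon,\varphi)|\le C\|u_0\|_{H^2(\Omega)}\bigl[\|\mathrm{div}(\varphi)\|_{L^2(\Omega)}+\varepsilon^{1/2}\|\nabla\varphi\|_{L^2(\Omega_{2\varepsilon})}+\varepsilon\|\nabla\varphi\|_{L^2(\Omega)}\bigr]$,
obtained with a boundary cutoff $\theta_\varepsilon$, applied to the (divergence-free) adjoint solution $\varphi_\varepsilon$, combined with the thin-layer estimate $\|\nabla\varphi_\varepsilon\|_{L^2(\Omega_{2\varepsilon})}\le C\sqrt{\varepsilon}\|H\|_{L^2(\Omega)}$ deduced from the $O(\sqrt{\varepsilon})$ expansion of the adjoint problem; the product $\varepsilon^{1/2}\cdot\varepsilon^{1/2}$ is what produces the sharp $O(\varepsilon)$. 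Your sketch of the duality step does not identify this localization to $\Omega_{2\varepsilon}$, and your fallback remark about an $H^{-1/2}(\partial\Omega)$ bound on the conormal datum or a boundary-layer corrector is precisely what the paper argues is unavailable here, because for Stokes systems the conormal derivative couples to the unknown pressure.
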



Theorem \ref{thm.convergence-L2.Neumann} gives us the order $O(\varepsilon)$ convergence of the velocity in $L^2$, which is optimal in the sense of $\|u_0\|_{H^2(\Omega)}$. The other important result of this paper, which is shown in the next theorem, is that the two-scale expansion of $(u_\varepsilon, p_\varepsilon)$ has optimal $O(\varepsilon^{1/2})$ rates in $H^1\times L^2$. 

For simplicity, we will use the notation $h^\varepsilon(x)=h(x/\varepsilon)$, for any function $h$. Here $(\chi,\pi)$ are the correctors associated with $A$, defined as in (\ref{def.cell-problem}), and $S_\varepsilon$ is the Steklov smoothing operator introduced in (\ref{def.Steklov}). 

\begin{theorem}\label{thm.convergence-H1.Neumann}
Let $\Omega$ be a bounded $C^{1,1}$ domain. Suppose $A$ satisfies ellipticity condition (\ref{cond.ellipticity}) and periodicity condition (\ref{cond.periodicity}). Let $(u_\varepsilon,p_\varepsilon)$ and $(u_0,p_0)$ be the same as in Theorem \ref{thm.convergence-L2.Neumann}.  If $\int_\Omega u_\varepsilon=\int_\Omega u_0=0$, then
\begin{equation}\label{ineq.convergence-H1-u.Neumann}
\|u_\varepsilon-u_0-\varepsilon \chi^\varepsilon S_\varepsilon \left(\nabla\widetilde{u}_0\right)\|_{H^1(\Omega)} \le C\sqrt{\varepsilon}\|u_0\|_{H^2(\Omega)},
\end{equation}
where $\widetilde{u}_0$ is the extension of $u_0$ defined as in (\ref{def.extension-operator}). Moreover, if $\int_\Omega p_\varepsilon=\int_\Omega p_0=0$, then
\begin{equation}\label{ineq.convergence-H1-p.Neumann}            
\|p_\varepsilon-p_0-\big[\pi^\varepsilon S_\varepsilon \left(\nabla\widetilde{u}_0\right)-\average_\Omega \pi^\varepsilon S_\varepsilon \left(\nabla\widetilde{u}_0\right)\big]\|_{L^2(\Omega)}\le C\sqrt{\varepsilon}\|u_0\|_{H^2(\Omega)}.
\end{equation}
The constants $C$ in (\ref{ineq.convergence-H1-u.Neumann}) and (\ref{ineq.convergence-H1-p.Neumann}) depend only on $\mu$, $d$, and $\Omega$.
\end{theorem}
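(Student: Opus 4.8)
The plan is to establish the $H^1$ and $L^2$ error estimates of Theorem \ref{thm.convergence-H1.Neumann} via the standard two-scale expansion machinery adapted to the Stokes setting, where the pressure term requires careful bookkeeping. Define the first-order corrector $w_\varepsilon = u_\varepsilon - u_0 - \varepsilon\chi^\varepsilon S_\varepsilon(\nabla\widetilde{u}_0)$ for the velocity and a companion quantity $q_\varepsilon = p_\varepsilon - p_0 - [\pi^\varepsilon S_\varepsilon(\nabla\widetilde{u}_0) - \fint_\Omega \pi^\varepsilon S_\varepsilon(\nabla\widetilde{u}_0)]$ for the pressure. The first step is to compute $\mathcal{L}_\varepsilon(w_\varepsilon) + \nabla q_\varepsilon$ in $\Omega$ and $\partial w_\varepsilon/\partial\nu_\varepsilon - q_\varepsilon n$ on $\partial\Omega$. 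Using the cell problem (\ref{def.cell-problem}) satisfied by $(\chi,\pi)$, the bulk term should reduce, after introducing the flux-corrector (the antisymmetric matrix $\phi$ with $\mathcal{L}_\varepsilon(\chi^\varepsilon) = $ divergence of a periodic matrix), to a divergence-form expression $\mathrm{div}(H_\varepsilon)$ where $H_\varepsilon$ collects terms of the form $(\widehat{A} - A^\varepsilon - A^\varepsilon\nabla\chi^\varepsilon)(\nabla u_0 - S_\varepsilon\nabla\widetilde{u}_0)$ together with error terms carrying an explicit factor $\varepsilon$ and a second derivative of $u_0$. The key point is that each piece of $H_\varepsilon$ is either $O(\varepsilon)$ in $L^2$ outright, or of the form (bounded periodic) times (smoothing defect $\nabla u_0 - S_\varepsilon\nabla\widetilde{u}_0$), and the smoothing operator estimates (properties of $S_\varepsilon$ quantifying $\|\nabla u_0 - S_\varepsilon\nabla\widetilde{u}_0\|_{L^2}$ in terms of $\varepsilon\|u_0\|_{H^2}$, plus $\|f^\varepsilon S_\varepsilon g\|_{L^2} \le C\|f\|_{L^2_{per}}\|g\|_{L^2}$) convert everything into the target bound $C\sqrt\varepsilon\|u_0\|_{H^2(\Omega)}$.

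With the equation for $(w_\varepsilon, q_\varepsilon)$ in hand — a Stokes Neumann problem with right-hand side $\mathrm{div}(H_\varepsilon) + (\text{lower order})$ in the bulk, a divergence constraint $\mathrm{div}(w_\varepsilon) = \text{(small, coming from } \mathrm{div}(\varepsilon\chi^\varepsilon S_\varepsilon\nabla\widetilde{u}_0))$, and a Neumann datum on $\partial\Omega$ of the form $n\cdot H_\varepsilon + (\text{boundary correction terms})$ — the second step is to apply the energy estimate / a priori bound for the Neumann problem for Stokes systems with constant coefficients $\mathcal{L}_0$. This gives $\|w_\varepsilon\|_{H^1(\Omega)} + \|q_\varepsilon - \fint_\Omega q_\varepsilon\|_{L^2(\Omega)} \le C(\|H_\varepsilon\|_{L^2(\Omega)} + \|\mathrm{div}(w_\varepsilon)\|_{L^2(\Omega)} + \text{boundary terms})$. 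One subtlety is that $\chi^\varepsilon$ and $\pi^\varepsilon$ need not vanish on $\partial\Omega$, so the boundary layer near $\partial\Omega$ must be handled; this is precisely why the Steklov smoothing $S_\varepsilon$ and the extension $\widetilde{u}_0$ are introduced — one localizes to a boundary strip of width $O(\varepsilon)$, where $\|\nabla u_0\|_{L^2(\text{strip})} \le C\sqrt\varepsilon\|u_0\|_{H^2(\Omega)}$ by trace/Poincaré-type inequalities, absorbing the non-vanishing corrector contributions on $\partial\Omega$ at the cost of $\sqrt\varepsilon$.

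The third step isolates the pressure: the a priori estimate controls $q_\varepsilon$ only up to its mean, but $q_\varepsilon$ is already defined with a mean-zero pressure corrector, so after noting $\int_\Omega p_\varepsilon = \int_\Omega p_0 = 0$ one checks that $\fint_\Omega q_\varepsilon = 0$ automatically, giving (\ref{ineq.convergence-H1-p.Neumann}) directly from the bound on $\|q_\varepsilon - \fint_\Omega q_\varepsilon\|_{L^2}$. Finally, (\ref{ineq.convergence-H1-u.Neumann}) is simply the $H^1$ half of the same a priori estimate. I expect the main obstacle to be the bulk computation of $\mathcal{L}_\varepsilon(w_\varepsilon) + \nabla q_\varepsilon$: one must expand $\mathcal{L}_\varepsilon(\varepsilon\chi^\varepsilon S_\varepsilon\nabla\widetilde{u}_0)$ carefully using the product rule, invoke the corrector equation to cancel the singular $O(\varepsilon^{-1})$ terms, introduce the dual (flux) corrector $\phi = (\phi_{kij}^{\alpha\beta})$ to rewrite the remaining $O(1)$ term $(\widehat{A} - A^\varepsilon - A^\varepsilon\nabla_y\chi)(x/\varepsilon)S_\varepsilon\nabla\widetilde{u}_0$ as a pure divergence so that it is $O(\varepsilon)$ in $H^{-1}$ rather than merely $O(1)$ in $L^2$, and simultaneously track the pressure corrector $\pi$ so that the $\nabla(\pi^\varepsilon S_\varepsilon\nabla\widetilde u_0)$ terms cancel against $\nabla q_\varepsilon$ up to controllable remainders. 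Keeping the Stokes structure (the coupling of the velocity equation, the divergence constraint, and the pressure) consistent through all these manipulations, and ensuring the boundary terms from integrating by parts against the flux corrector are handled in the $O(\varepsilon)$ boundary strip, is where the bulk of the technical work lies.
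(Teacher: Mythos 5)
Your proposal has the right ingredients (two--scale expansion, the dual/flux corrector $\Phi$, the pressure--corrector relation $\pi=\operatorname{div}(q)$, Steklov smoothing, an $O(\varepsilon)$ boundary strip), but the second step is where the paper does its real work and where your sketch is either off--route or underspecified. First, a structural point: in the weak formulations the Neumann boundary data of $u_\varepsilon$ and $u_0$ are \emph{identical}, so the boundary integrals cancel immediately and $v_\varepsilon$ satisfies a pure interior bilinear--form identity (Lemma \ref{lem.v-bilinear-form} in the paper) with no boundary datum to compute. The paper deliberately avoids ``compute the conormal derivative of $v_\varepsilon$ and invoke an a priori estimate,'' because with merely bounded measurable $A$ the correctors are unbounded and their traces are not classically controlled. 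Second, your claimed bound $\|w_\varepsilon\|_{H^1}\le C\|H_\varepsilon\|_{L^2}+\cdots$ cannot close as stated: the problematic term $B^\varepsilon S_\varepsilon(\nabla\widetilde u_0)$ is $O(1)$ in $L^2$, not $O(\sqrt\varepsilon)$, and pairing $B^\varepsilon$ with the smoothing defect $\nabla u_0-S_\varepsilon\nabla\widetilde u_0$ (as your displayed form of $H_\varepsilon$ suggests) does not help because $B^\varepsilon$ is only $L^2_{\mathrm{per}}$, so Proposition~\ref{prop.steklov-multiple} does not apply — the bounded factor in the actual decomposition is $\widehat A-A^\varepsilon$, \emph{without} $A^\varepsilon\nabla\chi^\varepsilon$.

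The mechanism you are missing, which is the paper's Lemma \ref{lem.key-step}, is: introduce a cutoff $\theta_\varepsilon$ supported in $(\partial\Omega)_\varepsilon$ with $\theta_\varepsilon\equiv 1$ on $\partial\Omega$, split $\Phi^\varepsilon$ and $q^\varepsilon$ into $\theta_\varepsilon$ and $(1-\theta_\varepsilon)$ parts, observe that the interior $(1-\theta_\varepsilon)$ piece of the $\Phi$--term vanishes identically by antisymmetry $\Phi_{kij}^{\alpha\beta}=-\Phi_{ikj}^{\alpha\beta}$ (so no $H^2$ regularity of the test function is needed), and convert the interior $(1-\theta_\varepsilon)$ piece of the $q$--term, via $\pi_j^\beta=\partial_i q_{ij}^\beta$, into a contribution multiplying $\operatorname{div}\varphi$. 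This produces the estimate $|a_\varepsilon(v_\varepsilon,\varphi)|\le C\|u_0\|_{H^2}\bigl[\|\operatorname{div}\varphi\|_{L^2(\Omega)}+\varepsilon^{1/2}\|\nabla\varphi\|_{L^2(\Omega_{2\varepsilon})}+\varepsilon\|\nabla\varphi\|_{L^2(\Omega)}\bigr]$ for \emph{arbitrary} $\varphi\in H^1$. Note the first term on the right is $O(1)$ for general $\varphi$, so no generic a priori estimate gives $O(\sqrt\varepsilon)$; the point is to then \emph{test with $\varphi=v_\varepsilon$ itself}, where $\operatorname{div}(v_\varepsilon)=-\varepsilon\chi^\varepsilon S_\varepsilon(\nabla^2\widetilde u_0)$ is $O(\varepsilon)$, and close by Cauchy--Schwarz. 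Without this specific test--function choice and the $\operatorname{div}(\varphi)$--weighted form of the key bound, the argument does not close; this is a genuine gap in your second step. Your third step (the $L^2$ pressure bound via the explicit computation of $\mathcal L_\varepsilon(v_\varepsilon)+\nabla(\text{pressure error})$ and the estimate $\|p-\fint p\|_{L^2}\le C\{\|F\|_{H^{-1}}+\|u\|_{H^1}\}$) is essentially correct and matches the paper, \emph{given} the $H^1$ bound from step two.
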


There are relatively fewer known $L^2$ convergence rates results for  Neumann problems than for Dirichlet cases. For the scalar elliptic equation $\mathcal{L}_\varepsilon(u_\varepsilon)=-\text{div}(A(x/\varepsilon)\nabla u_\varepsilon)=F$ in $\Omega$ with Neumann condition $\frac{\partial u_\varepsilon}{\partial \nu_\varepsilon}=0$ on $\partial\Omega$, the estimate $\|u_\varepsilon-u_0\|_{L^2(\Omega)}\le C\varepsilon\|F\|_{H^2(\Omega)}$ 
was proved by Griso\cite{Griso06} for $C^{1,1}$ domains with bounded measurable coefficients, by using the method of periodic unfolding (see \cite{CioranescuDamlamianGriso02,CioranescuDamlamianGriso08}). The same result was also proved by Moskow and Vogelius \cite{MoskowVogelius97} for curvilinear convex polygons $\Omega$ in $\mathbb{R}^2$. 

For the system case, consider the standard second-order elliptic systems $\mathcal{L}_\varepsilon (u_\varepsilon)=F$ in $\Omega$ with Neumann condition $\frac{\partial u_\varepsilon}{\partial \nu_\varepsilon}=g$ on $\partial\Omega$, Kenig, Lin and Shen\cite{KenigLinShen12} have shown that the better estimate (\ref{ineq.convergence-L2.Neumann}) holds in bounded Lipschitz domain $\Omega$, under additional assumption that $A$ is H\"older continuous. Let
$$v_\varepsilon=u_\varepsilon-u_0-\varepsilon\chi^\varepsilon \nabla u_0$$ be the difference between $u_\varepsilon$ and its first order approximation. The approach used in \cite{KenigLinShen12} was based on the explicit computation of conormal derivative of $v_\varepsilon$ and uniform regularity estimates for the $L^2$ Neumann problem derived in \cite{KenigShen1102,KenigShen1101}. Moreover, if additionally assume that $A$ is H\"older continuous and symmetric, it was proved in \cite{KenigLinShen12} that $\|v_\varepsilon\|_{H^{1/2}(\partial\Omega)}\le C\varepsilon\|u_0\|_{H^2(\Omega)}$. 

Nevertheless, the method used in \cite{KenigLinShen12} cannot be applied to operators with bounded measurable coefficients, since the correctors $\chi$ are not necessarily bounded. With the help of the Steklov smoothing operator, Suslina \cite{Suslina1301,Suslina1302} was able to establish the $O(\varepsilon)$ estimate (\ref{ineq.convergence-L2.Neumann}) in $L^2$ for a broader class of elliptic operators, which includes the standard elliptic systems in divergence form. Instead, $u_0+\varepsilon\chi^\varepsilon S_\varepsilon\left(\nabla\widetilde{u}_0\right)$ was used as the first order approximation of $u_\varepsilon$, and the following difference
\begin{equation}\label{intro.1}
v_\varepsilon=u_\varepsilon-u_0-\varepsilon\chi^\varepsilon S_\varepsilon\left(\nabla\widetilde{u}_0\right),
\end{equation}
was adopted, where $S_\varepsilon$ is the Steklov smoothing operator and $\widetilde{u}_0$ is an extension of $u_0$ to $\mathbb{R}^d$. 
For elliptic Neumann problems, the $O(\sqrt{\varepsilon})$ convergence (\ref{ineq.convergence-H1-u.Neumann}) in $H^1$ was obtained by the estimate of the boundary layer corrector term and the following sharp convergence rates for homogenization in the whole space 
$
\|u_\varepsilon-u_0\|_{L^2(\mathbb{R}^d)}\le C\varepsilon\|F\|_{L^2(\mathbb{R}^d)}.
$
The $O(\varepsilon)$ estimate (\ref{ineq.convergence-L2.Neumann}) in $L^2$ then can be deduced by applying the estimate (\ref{ineq.convergence-H1-u.Neumann}) to adjoint problems and a duality argument.

However, the case of Stokes systems certainly does not fit the standard framework of standard second-order elliptic systems in divergence form. As expected, in the study of Stokes or Navier-Stokes systems, the main difficulty is to deal with the pressure term $p_\varepsilon$. Because the conormal derivative includes the pressure term, it's not appropriate to use boundary layer corrector term as Suslina did for the elliptic Neumann problems \cite{Suslina1302} or as in \cite{Gu1501} for Dirichlet problems of Stokes systems. Instead, in this paper we use a more direct approach which helps us to avoid the convergence rates result for the whole space. The key intermediate step we use is the following,
$$
\left|\int_\Omega A^\varepsilon\nabla v_\varepsilon\cdot \nabla \varphi\right|\le C\|u_0\|_{H^2(\Omega)}\left[\|\text{div}(\varphi)\|_{L^2(\Omega)}+\varepsilon^{1/2}\|\nabla\varphi\|_{L^2(\Omega_{2\varepsilon})}+\varepsilon\|\nabla\varphi\|_{L^2(\Omega)}\right], 
$$
for any $\varphi\in H^1(\Omega;\mathbb{R}^d)$, and $\Omega_\varepsilon=\{x\in\Omega: \text{dist}(x,\partial \Omega)<\varepsilon\}$.
By choosing the suitable test function in the above key step, and provided the estimates of the correctors $(\chi, \pi)$ and their duals $(\Phi, q)$, we are able to establish the $O(\sqrt{\varepsilon})$ error estimates for the two-scale expansions of $(u_\varepsilon, p_\varepsilon)$ in $H^1\times  L^2$, which is stated in Theorem \ref{thm.convergence-H1.Neumann}. We emphasize that convergence rates of the pressure term in $L^2$ require an explicit computation of the pressure term corresponding to $v_\varepsilon$. At last, we apply the $O(\sqrt{\varepsilon})$ estimates to adjoint problems, and further obtain the $O(\varepsilon)$ estimate in $L^2$ through duality argument as well as the key intermediate step.

The theory of homogenization for operators with rapidly oscillating coefficients has been playing a vital part in describing the behavior of composite materials, which contain two or more finely mixed constituents. For Stokes systems with rapidly oscillating coefficients, one application would be studying the groundwater behavior in layered aquifer structures. Another application is related to the incompressible free fluid in porous media. If the ratio of the size of the porous to the period is $O(\varepsilon)$, and consider a  viscosity function which characterizes different viscosities in fluid and solid parts, then the homogenization theory in porous media and the derivation of Darcy's law may be regarded as one type of homogenization of Stokes systems with rapidly oscillating periodic coefficients.

The problem of convergence rates has been playing an essential role in quantitative homogenization. Most recent work on the problem of convergence rates in periodic homogenization may be found in \cite{JikovKozlovOleinik94,Griso04,Griso06,Pastukhova06,OnofreiVernescu07,KenigLinShen12,KenigLinShen14,PakhninSuslina13,Suslina1301,Suslina1302,Shen15,Gu1501,ShenZhuge15,GengShen16,ZhikovPastukhova05} and their references.

We now mention the potential applications of these results. Inspired by recent paper of Shen \cite{Shen15} on systems of linear elasticity, we expect to establish the boundary Lipschitz estimates in $C^{1,\alpha}$ domains for Neumann problems of Stokes systems with rapidly oscillating periodic coefficients, using convergence rates in $H^1$ and $L^2$ rather than the compactness method introduced by Avellaneda and Lin \cite{AL8701}. We may also use the result to investigate the $C^{\alpha}$, $W^{1,p}$, and $L^p$ estimates in $C^1$ domains with VMO or H\"older continuous coefficients.

The paper is organized as follows.
In Section 2 we state the homogenization theory of Stokes systems, estimates of dual correctors $(\Phi, q)$ and also the Steklov smoothing operator. In Section 3 we derive the key intermediate step by explicit computation of the system and conormal derivative that $v_\varepsilon$ satisfies, and further we prove the $O(\sqrt{\varepsilon})$ rate of $u_\varepsilon$ in $H^1$. 
In Section 4 we need a more explicit computation to prove the $O(\sqrt{\varepsilon})$ for the two-scale expansion of the pressure term $p_\varepsilon$. Finally, our main theorem Theorem \ref{thm.convergence-L2.Neumann} is proved in Section 5 by applying Theorem \ref{thm.convergence-H1.Neumann} to adjoint systems and also a duality argument.

Throughout this paper, we use $Y=[0,1)^d$ to denote the unit cube and define the $L^1$ average of $f$ over the set $E$ as
$$
\average_E f= \frac{1}{|E|}\int_E f.
$$
We will use $C$ to denote constants that may depend on $d$, $\mu$ or $\Omega$, but never on $\varepsilon$.

\section{Preliminaries}
\setcounter{equation}{0}

\subsection{Weak solution of Stokes systems}
We use this subsection to review the weak solutions of Stokes systems and the qualitative homogenization theorem for Neumann problems. Details may be found in \cite{Lions80,GuShen15,Gu16}.

Let $\Omega$ be a bounded Lipschitz domain in $\mathbb{R}^d$. Given $F\in H^{-1}(\Omega;\mathbb{R}^d)$ and $g\in L^2(\Omega)$, consider the following Stokes system 
\begin{equation}\label{def.Stokes}
\left\{
\begin{aligned}
\mathcal{L}_\varepsilon(u_\varepsilon)+\nabla p_\varepsilon &= F \\
\text{div}(u_\varepsilon) &=g\\
\end{aligned}
\right.
\qquad\text{in}\quad \Omega.
\end{equation}  
We define the bilinear form $a_\varepsilon(\cdot,\cdot)$ by
$$
a_\varepsilon(u,v)=\int_\Omega a_{ij}^{\alpha\beta}\left(\frac{x}{\varepsilon}\right)\frac{\partial u^\beta}{\partial x_j}\frac{\partial v^\alpha}{\partial x_i} dx, \qquad \forall\ u,v \in H^1(\Omega;\mathbb{R}^d),
$$
then we say that $(u_\varepsilon,p_\varepsilon)\in H^1(\Omega;\mathbb{R}^d)\times L^2(\Omega)$ is a weak solution of system (\ref{def.Stokes}) if 
$$
a_\varepsilon(u_\varepsilon,\varphi)-\int_\Omega p_\varepsilon \text{ div}(\varphi)=\langle F,\varphi\rangle, \qquad\forall\ \varphi \in C_0^1(\Omega;\mathbb{R}^d),
$$
and $\text{div}(u_\varepsilon)=g$ in $\Omega$ (in the sense of distribution).
\begin{theorem}\label{thm.weak-solution.Neumann}
Let $\Omega$ be a bounded Lipschitz domain in $\mathbb{R}^d$. Suppose $A(y)$ satisfies the ellipticity condition (\ref{cond.ellipticity}) and given $F\in H^{-1}(\Omega;\mathbb{R}^d)$, $g\in L^2(\Omega)$ and $f\in H^{-1/2}(\partial\Omega;\mathbb{R}^d)$ satisfying the compatibility condition (\ref{cond.compatibility.Neumann}). Then there exist a unique (up to constants) $u_\varepsilon \in H^1(\Omega;\mathbb{R}^d)$ and $p_\varepsilon \in L^2(\Omega)$ satisfying the following weak formulation
\begin{equation}\label{eq.weak-formulation}
a_\varepsilon(u_\varepsilon,\varphi)-\int_\Omega p_\varepsilon \text{\rm div}(\varphi)\,dx=\int_\Omega F\varphi \,dx+\int_{\partial \Omega} f\varphi \,dS, \quad \forall \varphi\in H^1(\Omega;\mathbb{R}^d),
\end{equation}
then we say $(u_\varepsilon,p_\varepsilon)$ is a weak solution of Stokes system (\ref{def.Stokes}) with Neumann boundary condition $\frac{\partial u_\varepsilon}{\partial \nu_\varepsilon}-p_\varepsilon \, n=f$ on $\partial\Omega$. Moreover, if $\int_\Omega u_\varepsilon\, dx=0$, then
\begin{equation}\label{ineq.energy.Neumann}
\|u_\varepsilon\|_{H^1(\Omega)}+\|p_\varepsilon-\average_\Omega p_\varepsilon\|_{L^2(\Omega)} \le C\left\{\|F\|_{H^{-1}(\Omega)}+\|g\|_{L^2(\Omega)}+\|f\|_{H^{-1/2}(\partial\Omega)}\right\},
\end{equation}
where $C$ depends only on $d$, $\mu$, and $\Omega$.
\end{theorem}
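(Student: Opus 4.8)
The plan is to read the weak formulation (\ref{eq.weak-formulation}) as a linear saddle-point problem for $(u_\varepsilon,p_\varepsilon)$ and solve it by the standard two-step scheme for Stokes-type systems: first solve the velocity equation on the subspace of divergence-free fields via the Lax--Milgram lemma, then recover the pressure from the Ne\v{c}as--Bogovskii inf-sup inequality for the divergence operator. The compatibility condition (\ref{cond.compatibility.Neumann}) is used precisely to absorb the kernel of $\mathcal L_\varepsilon$ acting on $H^1$, namely the constant vector fields; and the ellipticity condition (\ref{cond.ellipticity}) gives coercivity directly in terms of the full gradient (so no Korn-type inequality is needed), with coercivity constant $\mu$. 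Since, moreover, the inf-sup constant and the Poincar\'e constant do not see $A$ at all, this is also why the final constant $C$ depends only on $d,\mu,\Omega$ and not on $\varepsilon$.

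First I would dispose of the inhomogeneous constraint $\text{div}(u_\varepsilon)=g$. On a bounded Lipschitz domain one constructs, via the Bogovskii operator applied to $g-\average_\Omega g\in L^2(\Omega)$ together with the explicit correction $\frac1d(x-x_0)\average_\Omega g$ for the mean, a field $u_*\in H^1(\Omega;\mathbb R^d)$ with $\text{div}(u_*)=g$ and $\|u_*\|_{H^1(\Omega)}\le C\|g\|_{L^2(\Omega)}$. Writing $u_\varepsilon=w+u_*$ with $\text{div}(w)=0$, it suffices to find $w$ in $V:=\{\varphi\in H^1(\Omega;\mathbb R^d):\text{div}(\varphi)=0\}$ and $p_\varepsilon\in L^2(\Omega)$ with $a_\varepsilon(w,\varphi)-\int_\Omega p_\varepsilon\,\text{div}(\varphi)\,dx=\Lambda(\varphi)$ for all $\varphi\in H^1(\Omega;\mathbb R^d)$, where $\Lambda(\varphi):=\int_\Omega F\varphi\,dx+\int_{\partial\Omega}f\varphi\,dS-a_\varepsilon(u_*,\varphi)$ is a bounded functional on $H^1(\Omega;\mathbb R^d)$ of norm at most $C\{\|F\|_{H^{-1}(\Omega)}+\|f\|_{H^{-1/2}(\partial\Omega)}+\|g\|_{L^2(\Omega)}\}$.

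Restricting test functions to $V$ annihilates the pressure term. On the closed subspace $\dot V:=V\cap\{\varphi:\int_\Omega\varphi=0\}$ the form $a_\varepsilon$ is bounded and, from $a_\varepsilon(\varphi,\varphi)\ge\mu\int_\Omega|\nabla\varphi|^2$ and the Poincar\'e inequality, coercive; since $A$ need not be symmetric, Lax--Milgram (rather than Riesz representation) produces a unique $w\in\dot V$ with $a_\varepsilon(w,\varphi)=\Lambda(\varphi)$ for all $\varphi\in\dot V$. As $a_\varepsilon(w,c)=0$ for any constant $c$, and $\Lambda(c)=\int_\Omega F\cdot c+\int_{\partial\Omega}f\cdot c=0$ by (\ref{cond.compatibility.Neumann}), this identity in fact holds for all $\varphi\in V$. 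Testing with $\varphi=w$ and using coercivity and Poincar\'e gives $\|w\|_{H^1(\Omega)}\le C\|\Lambda\|$.

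It remains to produce the pressure and assemble the estimate. The bounded functional $\varphi\mapsto\Lambda(\varphi)-a_\varepsilon(w,\varphi)$ vanishes on $\ker(\text{div})=V$; since $\text{div}\colon H^1(\Omega;\mathbb R^d)\to L^2(\Omega)$ is surjective with a bounded right inverse on a Lipschitz domain (the inf-sup/LBB condition), the closed-range theorem yields $p_\varepsilon\in L^2(\Omega)$ with $\Lambda(\varphi)-a_\varepsilon(w,\varphi)=\int_\Omega p_\varepsilon\,\text{div}(\varphi)\,dx$ for all $\varphi$, and the inf-sup constant gives $\|p_\varepsilon-\average_\Omega p_\varepsilon\|_{L^2(\Omega)}\le C\{\|\Lambda\|+\|w\|_{H^1(\Omega)}\}$. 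Setting $u_\varepsilon:=w+u_*-\average_\Omega u_*$ (so $\int_\Omega u_\varepsilon=0$, which does not change the weak formulation) and combining the bounds gives (\ref{ineq.energy.Neumann}); uniqueness up to constants follows because two normalized solutions differ by $(v,\pi)$ solving the homogeneous problem, testing with $\varphi=v\in V$ forces $\nabla v\equiv0$ hence $v\equiv0$, and then $\int_\Omega\pi\,\text{div}(\varphi)\,dx=0$ for all $\varphi$ forces $\pi$ to be constant. The one genuinely Stokes-specific and geometry-dependent ingredient, and the step I expect to carry the real content, is the inf-sup/Bogovskii surjectivity of $\text{div}$ on the Lipschitz domain $\Omega$; the rest is routine Lax--Milgram bookkeeping, and I would cite \cite{Lions80,GuShen15,Gu16} for the precise forms used.
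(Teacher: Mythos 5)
Your argument is correct and is exactly the route the paper indicates (its proof is the single line ``based on the Lax--Milgram Theorem; we skip the details''): lift the divergence constraint by a Bogovskii-type field, apply Lax--Milgram on the mean-zero divergence-free subspace using the pointwise coercivity from (\ref{cond.ellipticity}) and the compatibility condition (\ref{cond.compatibility.Neumann}) to annihilate constants, and recover the pressure from the surjectivity/inf-sup property of $\mathrm{div}$ on the Lipschitz domain. The only nit is a sign: with your convention $\Lambda(\varphi)-a_\varepsilon(w,\varphi)=\int_\Omega p_\varepsilon\,\mathrm{div}(\varphi)\,dx$, the pressure entering (\ref{eq.weak-formulation}) is $-p_\varepsilon$, which changes nothing.
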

\begin{proof}
The proof is based on the Lax-Milgram Theorem. We skip the details here.
\end{proof}

\begin{remark}
Suppose $\Omega$ is $C^{1,1}$ and $A$ is a constant matrix, and provided that $F\in L^2(\Omega; \mathbb{R}^d)$, $g\in H^1(\Omega)$ and $f\in H^{1/2}(\partial\Omega; \mathbb{R}^d)$, then the weak solution $(u, p)$ given by Theorem \ref{thm.weak-solution.Neumann}, is in $H^2(\Omega; \mathbb{R}^d)\times H^1(\Omega)$.
Moreover, if $\int_\Omega u\, dx=0$, then
\begin{equation}\label{ineq.energy0.Neumann}
\|u\|_{H^2(\Omega)}+\|\nabla  p\|_{L^2(\Omega)} 
\le C\Big\{\|F\|_{L^2(\Omega)}+\|g\|_{H^1(\Omega)}+\|f\|_{H^{1/2}(\partial\Omega)}\Big\},
\end{equation}
where $C$ depends only on $d$, $\mu$, and $\Omega$ (see e.g. \cite{GiaquintaModica82}).
\end{remark}

\subsection{Correctors and Homogenization theorem}
For each $1\le j,\beta\le d$, we define the 1-periodic functions $(\chi_j^\beta,\pi_j^\beta)\in H^1_{\text{loc}}(\mathbb{R}^d;\mathbb{R}^d)\times L^2_{\text{loc}}(\mathbb{R}^d)$ as the correctors for the Stokes system (\ref{def.Stokes}), which satisfy the following cell problem
\begin{equation}\label{def.cell-problem}
\left\{
\begin{aligned}
\mathcal{L}_1(\chi_j^\beta+P_j^\beta)+\nabla \pi_j^\beta &= 0 \quad \text{in }\mathbb{R}^d, \\
\text{div}(\chi_j^\beta) &=0 \quad \text{in }\mathbb{R}^d,\\
\int_Y \pi_j^\beta=0, \quad \int_Y \chi_j^\beta &=0,\\
\end{aligned}
\right.
\end{equation}
where $P_j^\beta=P_j^\beta(y)=y_j e^\beta=y_j(0,\cdots,1,\cdots,0)$ with 1 in the $\beta^{\text{th}}$ position. Existence of such functions can be found in \cite{Gu1501}.
The homogenized system for the Stokes system (\ref{def.Stokes}) is given by
\begin{equation}\label{def.Stokes0}
\left\{
\begin{aligned}
\mathcal{L}_0(u_0)+\nabla p_0 &= F \\
\text{div}(u_0) &=g, \\
\end{aligned}
\right.
\end{equation}
where $\mathcal{L}_0=-\text{div}(\widehat{A}\nabla)$ is a second-order elliptic operator with constant coefficients $\widehat{A}=(\widehat{a}_{ij}^{\alpha\beta})$, with
\begin{equation*}
\widehat{a}_{ij}^{\alpha\beta}=\int_Y \left[a_{ij}^{\alpha\beta}(y)+a_{ik}^{\alpha\gamma}(y)\frac{\partial \chi^{\gamma \beta}_j(y)}{\partial y_k}\right]\, dy.
\end{equation*}
We should remark that $(\widehat{A})^*=\widehat{A^*}$, and the effective matrix $\widehat{A}$ is also elliptic. The following is a homogenization theorem for Neumann problem for the Stokes system.

\begin{theorem}\label{thm.homogenization.Neumann}
Suppose $A(y)$ satisfies the ellipticity (\ref{cond.ellipticity}) and periodicity  (\ref{cond.periodicity}) conditions. Let $\Omega$ be a bounded Lipschitz domain. Let $(u_\varepsilon,p_\varepsilon)\in H^1(\Omega;\mathbb{R}^d)\times L^2(\Omega)$ be a weak solution of Neumann problem (\ref{def.Stokes.Neumann}) in the sense of (\ref{eq.weak-formulation}), provided that $F\in H^{-1}(\Omega;\mathbb{R}^d)$, $g\in L^2(\Omega)$ and $f\in H^{-1/2}(\partial\Omega;\mathbb{R}^d)$ satisfying the compatibility condition (\ref{cond.compatibility.Neumann}). Assume that $\int_\Omega u_\varepsilon=\int_\Omega p_\varepsilon=0$, then as $\varepsilon \rightarrow 0$,
$$
\left\{
\begin{aligned}
u_\varepsilon &\rightarrow u_0 \quad\text{\rm strongly in }L^2(\Omega;\mathbb{R}^d),\\
u_\varepsilon &\rightharpoonup u_0 \quad\text{\rm weakly in }H^1(\Omega;\mathbb{R}^d),\\
p_\varepsilon &\rightharpoonup p_0 \quad\text{\rm weakly in }L^2(\Omega),\\
A(x/\varepsilon)\nabla u_\varepsilon &\rightharpoonup \widehat{A}\nabla u_0 \quad\text{\rm weakly in }L^2(\Omega;\mathbb{R}^{d\times d}).\\
\end{aligned}
\right.
$$
Moreover, $\int_\Omega u_0=\int_\Omega p_0=0$ and  $(u_0,p_0)$ is the weak solution of the homogenized problem (\ref{def.Stokes0.Neumann}), in the following weak sense
\begin{equation}\label{eq.weak-formulation0}
\int_{\Omega} \widehat{a}_{ij}^{\alpha\beta}\frac{\partial u_0^\beta}{\partial x_j}\frac{\partial \varphi^\alpha}{\partial x_i}\, dx-\int_\Omega p_0 \,\text{\rm div}(\varphi)\, dx=\int_\Omega F\varphi \, dx +\int_{\partial\Omega} f\varphi dS, \quad\forall \varphi\in H^1(\Omega;\mathbb{R}^d). 
\end{equation}
\end{theorem}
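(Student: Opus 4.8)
\section*{Proof proposal for Theorem \ref{thm.homogenization.Neumann}}

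The plan is to run Tartar's method of oscillating test functions, adapted to the Stokes setting by bringing the pressure of the adjoint cell problem into the test function. \textbf{Step 1 (uniform bounds and extraction).} Since $\int_\Omega u_\varepsilon = \int_\Omega p_\varepsilon = 0$, the energy estimate \eqref{ineq.energy.Neumann} gives $\|u_\varepsilon\|_{H^1(\Omega)} + \|p_\varepsilon\|_{L^2(\Omega)} \le C$ with $C$ independent of $\varepsilon$. Passing to a subsequence, $u_\varepsilon \rightharpoonup u_*$ in $H^1(\Omega;\mathbb{R}^d)$, $p_\varepsilon \rightharpoonup p_*$ in $L^2(\Omega)$, and $A(x/\varepsilon)\nabla u_\varepsilon \rightharpoonup \xi_*$ in $L^2(\Omega;\mathbb{R}^{d\times d})$ for some $\xi_*$; by the Rellich--Kondrachov theorem $u_\varepsilon \to u_*$ strongly in $L^2$. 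Weak $H^1$ convergence and $\text{div}(u_\varepsilon)=g$ force $\text{div}(u_*)=g$, and the normalizations survive in the limit, so $\int_\Omega u_* = \int_\Omega p_* = 0$. Letting $\varepsilon\to 0$ in \eqref{eq.weak-formulation} yields, for all $\varphi\in H^1(\Omega;\mathbb{R}^d)$,
\[
\int_\Omega \xi_*\cdot\nabla\varphi\,dx - \int_\Omega p_*\,\text{div}(\varphi)\,dx = \int_\Omega F\varphi\,dx + \int_{\partial\Omega} f\varphi\,dS .
\]
Everything reduces to identifying $\xi_* = \widehat{A}\nabla u_*$ a.e.\ in $\Omega$.

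\textbf{Step 2 (identification of the flux).} Let $(\chi_j^{*\beta},\pi_j^{*\beta})$ be the correctors of \eqref{def.cell-problem} associated with the adjoint matrix $A^*=(a_{ji}^{\beta\alpha})$, and set $\Psi_\varepsilon^{j\beta}(x) = x_j e^\beta + \varepsilon\chi_j^{*\beta}(x/\varepsilon)$. A direct computation from the cell problem gives $\mathcal{L}_\varepsilon^*(\Psi_\varepsilon^{j\beta}) + \nabla\big(\pi_j^{*\beta}(x/\varepsilon)\big) = 0$ and $\text{div}(\Psi_\varepsilon^{j\beta}) = \delta_{j\beta}$, while $\Psi_\varepsilon^{j\beta}\to x_j e^\beta$ in $L^2$, $\pi_j^{*\beta}(x/\varepsilon)\rightharpoonup 0$ in $L^2$, and $(A^*)^\varepsilon\nabla\Psi_\varepsilon^{j\beta}\rightharpoonup \average_Y\!\big[A^*(e^\beta\!\otimes e^j+\nabla\chi_j^{*\beta})\big]$, which is the $(j\beta)$-column of $\widehat{A^*}=(\widehat{A})^*$. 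For a scalar $\varphi\in C_0^\infty(\Omega)$ and each fixed $(j,\beta)$, use $\varphi\Psi_\varepsilon^{j\beta}$ as test function in \eqref{eq.weak-formulation} for $(u_\varepsilon,p_\varepsilon)$, use $\varphi u_\varepsilon$ as test function in the weak formulation of the $\Psi_\varepsilon^{j\beta}$-equation, and subtract. The two bilinear terms without a $\nabla\varphi$ factor cancel by transposition of $A^\varepsilon$; the terms carrying $\nabla\varphi$ pass to the limit because each pairs a weakly convergent factor ($A^\varepsilon\nabla u_\varepsilon$, resp.\ $(A^*)^\varepsilon\nabla\Psi_\varepsilon^{j\beta}$) with a strongly convergent one ($\Psi_\varepsilon^{j\beta}$, resp.\ $u_\varepsilon$); and the pressure contributions are handled as described below. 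The resulting identity, valid for all such $\varphi$, gives $(\xi_*)_{j}^{\beta} = \widehat{a}_{jk}^{\beta\gamma}\,\partial_k u_*^{\gamma}$ a.e., i.e.\ $\xi_* = \widehat{A}\nabla u_*$.

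\textbf{Step 3 (conclusion).} With $\xi_*=\widehat{A}\nabla u_*$ the limit equation of Step 1 is precisely \eqref{eq.weak-formulation0}, so $(u_*,p_*)$ solves the homogenized Neumann problem; by uniqueness up to constants (the constant-coefficient version of Theorem \ref{thm.weak-solution.Neumann}) together with $\int_\Omega u_* = \int_\Omega p_* = 0$, we get $(u_*,p_*)=(u_0,p_0)$. Since the limit does not depend on the subsequence, the full family converges, giving the four asserted convergences (the strong $L^2$ one from Rellich). The step I expect to require the most care is the bookkeeping of the two pressure terms in the oscillating test function computation, which has no counterpart in the scalar elliptic case: the divergence of $\varphi\Psi_\varepsilon^{j\beta}$ is not small, namely $\nabla\varphi\cdot\Psi_\varepsilon^{j\beta}+\varphi\,\delta_{j\beta}$, so the term $\int_\Omega p_\varepsilon\,\text{div}(\varphi\Psi_\varepsilon^{j\beta})$ must be tracked and shown to combine correctly with the $\int_\Omega p_*\,\text{div}(\varphi)$ already present in the limit equation; conversely the oscillating pressure $\pi_j^{*\beta}(x/\varepsilon)$ appears only against $\text{div}(\varphi u_\varepsilon) = \nabla\varphi\cdot u_\varepsilon + \varphi g$ and must be shown to vanish using $\pi_j^{*\beta}(x/\varepsilon)\rightharpoonup 0$ in $L^2$ and $u_\varepsilon\to u_*$ strongly in $L^2$. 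Verifying these cancellations, and confirming that the div--curl pairings are of weak-times-strong type (which is exactly what the Rellich step provides), is the heart of the argument; the remaining manipulations are routine.
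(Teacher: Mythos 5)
Your proposal is correct and follows exactly the route the paper indicates: the paper's own proof of Theorem \ref{thm.homogenization.Neumann} consists of the single remark that it ``uses Tartar's oscillating testing function method,'' with all details deferred to the reference \cite{Gu16}, and your Steps 1--3 reconstruct precisely that argument, including the Stokes-specific bookkeeping (the nonzero divergence of $\varphi\Psi_\varepsilon^{j\beta}$ and the oscillating adjoint pressure $\pi_j^{*\beta}(x/\varepsilon)$) that has no counterpart in the scalar case.
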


\begin{proof}
The proof of Theorem \ref{thm.homogenization.Neumann} uses Tartar's oscillating testing function method. Details can be found in \cite{Gu16}.
\end{proof}
\subsection{Dual correctors of Stokes systems}
This subsection is used to introduce the dual correctors $(\Phi_{kij}^{\alpha\beta},q_{ij}^\beta)$ of Stokes systems and their properties. More details can be found in \cite{Gu1501}.

For $1\le i,j,\alpha,\beta \le d$, we let
\begin{equation}\label{def.b}
b_{ij}^{\alpha\beta}(y)=a_{ij}^{\alpha\beta}(y)+a_{ik}^{\alpha\gamma}(y)\frac{\partial}{\partial y_k}(\chi_{j}^{\gamma\beta})-\widehat{a}_{ij}^{\alpha\beta}.
\end{equation}
It is worth noting that $b_{ij}^{\alpha\beta}\in L^2(Y)$ is $1$-periodic with 
$\int_Y b_{ij}^{\alpha\beta}(y)dy=0$.
\begin{lemma}\label{lem.dual-correctors}
There exist $\varPhi_{kij}^{\alpha\beta} \in H_{\text{per}}^1(Y)$ and $q_{ij}^\beta\in H_{\text{per}}^1(Y)$ such that
\begin{equation}\label{eq.dual-correctors}
b_{ij}^{\alpha\beta}=\frac{\partial}{\partial y_k}(\Phi_{kij}^{\alpha\beta})+\frac{\partial}{\partial y_\alpha}(q_{ij}^\beta) \quad \text{\rm and }\Phi_{kij}^{\alpha\beta}=-\Phi_{ikj}^{\alpha\beta}.
\end{equation}
Moreover,
\begin{equation}\label{ineq.dual-corrector-energy}
\|\Phi_{kij}^{\alpha\beta}\|_{L^2(Y)}+\|q_{ij}^{\beta}\|_{L^2(Y)} \le C,
\end{equation}
and with relation 
\begin{equation}\label{eq.q-pi}
\pi_j^\beta=\frac{\partial q_{ij}^\beta}{\partial y_i},
\end{equation}
where $C$ depends only on $d$ and $\mu$.
\end{lemma}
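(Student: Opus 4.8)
The plan is to reduce the statement to the classical flux‑corrector construction on the torus, after separating off the contribution of the pressure corrector $\pi_j^\beta$. The starting point is a divergence identity for $b$: writing the $\alpha$‑component of the momentum equation in (\ref{def.cell-problem}) as $-\partial_{y_i}\big(a_{ik}^{\alpha\gamma}\partial_{y_k}(\chi_j^{\gamma\beta}+P_j^{\gamma\beta})\big)+\partial_{y_\alpha}\pi_j^\beta=0$ and using $\partial_{y_k}P_j^{\gamma\beta}=\delta_{kj}\delta^{\gamma\beta}$, the definition (\ref{def.b}) of $b$, and the fact that $\widehat A$ is constant, I obtain, for each fixed $j,\alpha,\beta$,
$$\frac{\partial}{\partial y_i}b_{ij}^{\alpha\beta}=\frac{\partial}{\partial y_\alpha}\pi_j^\beta\qquad\text{in }\mathbb{R}^d.$$
Together with $b_{ij}^{\alpha\beta}\in L^2(Y)$, $1$-periodic, with $\int_Y b_{ij}^{\alpha\beta}=0$, this identity is the only structural input needed.

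Next I would construct $q$. Since $\pi_j^\beta\in L^2(Y)$ is $1$-periodic with mean zero, for each fixed $j,\beta$ let $N_j^\beta\in H^1_{\text{per}}(Y)$ be the mean‑zero periodic solution of $\Delta N_j^\beta=\pi_j^\beta$; elliptic regularity on the torus gives $N_j^\beta\in H^2(Y)$ with $\|N_j^\beta\|_{H^2(Y)}\le C\|\pi_j^\beta\|_{L^2(Y)}$. Setting $q_{ij}^\beta:=\partial_{y_i}N_j^\beta\in H^1_{\text{per}}(Y)$ yields at once $\partial_{y_i}q_{ij}^\beta=\Delta N_j^\beta=\pi_j^\beta$, which is (\ref{eq.q-pi}), together with $\|q_{ij}^\beta\|_{H^1(Y)}\le C\|\pi_j^\beta\|_{L^2(Y)}\le C$, the last bound being the energy estimate for the cell problem (see \cite{Gu1501}), with $C=C(d,\mu)$.

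Then I would construct $\Phi$ by the classical device applied to $c_{ij}^{\alpha\beta}:=b_{ij}^{\alpha\beta}-\partial_{y_\alpha}q_{ij}^\beta$. By the divergence identity above, $\partial_{y_i}c_{ij}^{\alpha\beta}=\partial_{y_\alpha}\pi_j^\beta-\partial_{y_\alpha}\big(\partial_{y_i}q_{ij}^\beta\big)=0$, while $c_{ij}^{\alpha\beta}\in L^2(Y)$ is $1$-periodic with mean zero. For each fixed $i,j,\alpha,\beta$ let $\theta_{ij}^{\alpha\beta}\in H^1_{\text{per}}(Y)$ be the mean‑zero periodic solution of $\Delta\theta_{ij}^{\alpha\beta}=c_{ij}^{\alpha\beta}$ (so $\theta_{ij}^{\alpha\beta}\in H^2(Y)$), and set
$$\Phi_{kij}^{\alpha\beta}:=\frac{\partial}{\partial y_k}\theta_{ij}^{\alpha\beta}-\frac{\partial}{\partial y_i}\theta_{kj}^{\alpha\beta}\in H^1_{\text{per}}(Y),$$
which is manifestly antisymmetric in $(k,i)$. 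Since $\partial_{y_k}c_{kj}^{\alpha\beta}=0$, the function $\partial_{y_k}\theta_{kj}^{\alpha\beta}$ is periodic and harmonic, hence constant, hence $\equiv 0$ because it has zero mean; therefore $\partial_{y_k}\Phi_{kij}^{\alpha\beta}=\Delta\theta_{ij}^{\alpha\beta}-\partial_{y_i}\big(\partial_{y_k}\theta_{kj}^{\alpha\beta}\big)=c_{ij}^{\alpha\beta}$, and so $b_{ij}^{\alpha\beta}=\partial_{y_k}\Phi_{kij}^{\alpha\beta}+\partial_{y_\alpha}q_{ij}^\beta$, which is (\ref{eq.dual-correctors}). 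For the bound, $\|\Phi_{kij}^{\alpha\beta}\|_{L^2(Y)}\le 2\|\nabla\theta_{ij}^{\alpha\beta}\|_{L^2(Y)}\le C\|c_{ij}^{\alpha\beta}\|_{L^2(Y)}\le C\big(\|b_{ij}^{\alpha\beta}\|_{L^2(Y)}+\|q_{ij}^\beta\|_{H^1(Y)}\big)\le C$, using $\|b_{ij}^{\alpha\beta}\|_{L^2(Y)}\le C\big(1+\|\nabla\chi_j^\beta\|_{L^2(Y)}\big)\le C$; this is (\ref{ineq.dual-corrector-energy}).

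I do not expect a serious obstacle. The one genuine idea is the divergence identity $\partial_{y_i}b_{ij}^{\alpha\beta}=\partial_{y_\alpha}\pi_j^\beta$ — the Stokes replacement for the divergence‑free property of the elliptic flux corrector — and the bookkeeping that pushes the pressure contribution into $q$; once this is in hand, the rest is the standard torus elliptic‑regularity computation and the estimates are immediate from the cell‑problem energy bound. The single point requiring care is the vanishing of $\partial_{y_k}\theta_{kj}^{\alpha\beta}$, which is what makes $\partial_{y_k}\Phi_{kij}^{\alpha\beta}=c_{ij}^{\alpha\beta}$ hold exactly, and which relies precisely on $c$ being divergence‑free in its first index.
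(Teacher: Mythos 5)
Your construction is correct: the divergence identity $\partial_{y_i}b_{ij}^{\alpha\beta}=\partial_{y_\alpha}\pi_j^\beta$ from the cell problem, the choice $q_{ij}^\beta=\partial_{y_i}N_j^\beta$ with $\Delta N_j^\beta=\pi_j^\beta$, and the antisymmetrization $\Phi_{kij}^{\alpha\beta}=\partial_{y_k}\theta_{ij}^{\alpha\beta}-\partial_{y_i}\theta_{kj}^{\alpha\beta}$ (with $\partial_{y_k}\theta_{kj}^{\alpha\beta}\equiv 0$ by periodicity, harmonicity, and the mean-zero normalization) all check out, and the bounds follow from the cell-problem energy estimates as you say. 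The paper itself omits the proof and refers to \cite{Gu1501}; your argument is exactly the standard flux-corrector construction adapted to the Stokes pressure term that is used there, so there is nothing further to compare.
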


\subsection{Steklov smoothing operator and boundary layer integrals}
Steklov smoothing operator will again play a vital part in this paper, we refer the readers to literature such as \cite{Pastukhova06,Suslina1301,Suslina1302,PakhninSuslina12,ZhikovPastukhova05} and their references for its detailed properties and applications.

We define the Steklov smoothing operator $S_\varepsilon$ in $L^2(\mathbb{R}^d;\mathbb{R}^d)$ by 
\begin{equation}\label{def.Steklov}
(S_\varepsilon u)(x)=\average_Y u(x-\varepsilon z)dz
\end{equation}
which satisfies the estimate $\|S_\varepsilon u\|_{L^2(\mathbb{R}^d)} \le \|u\|_{L^2(\mathbb{R}^d)}$. Obviously, $D^{\alpha}S_\varepsilon u=S_\varepsilon D^\alpha u$ for $u\in H^s(\mathbb{R}^d;\mathbb{R}^d)$ and any multi-index $\alpha$ such that $|\alpha|\le s$, and easily we can see that $\|S_\varepsilon u\|_{H^s(\mathbb{R}^d)} \le \|u\|_{H^s(\mathbb{R}^d)}$.

\begin{prop}\label{prop.steklov-difference}
For any $u\in H^1(\mathbb{R}^d)$ we have
$$
\|S_\varepsilon u-u\|_{L^2(\mathbb{R}^d)} \le C\varepsilon \|\nabla u\|_{L^2(\mathbb{R}^d)},
$$
where $C$ does not depend on $\varepsilon$.
\end{prop}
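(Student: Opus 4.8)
The plan is to prove the estimate first for $u\in C_c^\infty(\mathbb{R}^d)$ and then pass to the limit by density, since both sides of the inequality are continuous in the $H^1(\mathbb{R}^d)$ topology and $S_\varepsilon$ is bounded on $L^2(\mathbb{R}^d)$. So assume $u$ is smooth and compactly supported. Starting from the definition (\ref{def.Steklov}), write
$$
(S_\varepsilon u)(x)-u(x)=\average_Y\big[u(x-\varepsilon z)-u(x)\big]\,dz=-\varepsilon\average_Y\int_0^1 z\cdot\nabla u(x-t\varepsilon z)\,dt\,dz,
$$
using the fundamental theorem of calculus along the segment joining $x$ to $x-\varepsilon z$.

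Next I would estimate the $L^2$ norm by squaring and applying the Cauchy--Schwarz inequality (Jensen's inequality) in the averaged variables $(z,t)\in Y\times[0,1]$, together with the elementary bound $|z|\le\sqrt d$ for $z\in Y$:
$$
\|S_\varepsilon u-u\|_{L^2(\mathbb{R}^d)}^2\le\varepsilon^2\int_{\mathbb{R}^d}\average_Y\int_0^1 |z|^2\,|\nabla u(x-t\varepsilon z)|^2\,dt\,dz\,dx\le d\,\varepsilon^2\average_Y\int_0^1\int_{\mathbb{R}^d}|\nabla u(x-t\varepsilon z)|^2\,dx\,dt\,dz,
$$
where I have used Tonelli's theorem to interchange the order of integration. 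The inner integral equals $\|\nabla u\|_{L^2(\mathbb{R}^d)}^2$ for every fixed $z,t$ by translation invariance of Lebesgue measure, so the double average contributes a factor $1$ and we obtain $\|S_\varepsilon u-u\|_{L^2(\mathbb{R}^d)}\le\sqrt d\,\varepsilon\,\|\nabla u\|_{L^2(\mathbb{R}^d)}$.

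Finally, given arbitrary $u\in H^1(\mathbb{R}^d)$, choose $u_n\in C_c^\infty(\mathbb{R}^d)$ with $u_n\to u$ in $H^1(\mathbb{R}^d)$; the estimate holds for each $u_n$, and letting $n\to\infty$ (using $\|S_\varepsilon(u_n-u)\|_{L^2}\le\|u_n-u\|_{L^2}$ and $\|\nabla u_n\|_{L^2}\to\|\nabla u\|_{L^2}$) gives the claim with $C=\sqrt d$. There is no serious obstacle here; the only points requiring a little care are the Jensen step in the averaged variables and the density argument at the end. (An alternative route, which I would mention only in passing, is to use Plancherel's theorem: $S_\varepsilon$ acts as a Fourier multiplier $m(\varepsilon\xi)$ with $|m(\eta)-1|\le C|\eta|$, which immediately yields $\|S_\varepsilon u-u\|_{L^2}^2\le C\varepsilon^2\int_{\mathbb{R}^d}|\xi|^2|\widehat u(\xi)|^2\,d\xi=C\varepsilon^2\|\nabla u\|_{L^2}^2$.)
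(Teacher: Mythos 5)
Your proof is correct and complete. Note that the paper does not actually prove this proposition: it states it as a known property of the Steklov smoothing operator and refers to the literature (Pastukhova, Suslina, Zhikov--Pastukhova). The argument you give --- the fundamental theorem of calculus along the segment from $x$ to $x-\varepsilon z$, Jensen's inequality over the probability space $Y\times[0,1]$, the bound $|z|\le\sqrt d$, and translation invariance, followed by density of $C_c^\infty$ in $H^1$ --- is exactly the standard proof found in those references, and your Fourier-multiplier remark is an equally valid shortcut. No gaps.
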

\begin{prop}\label{prop.steklov-multiple}
Let $f(x)$ be a $1$-periodic function in $\mathbb{R}^d$ such that $f\in L^2(Y)$. Then for any $u\in L^2(\mathbb{R}^d)$,
$$
\|f^\varepsilon S_\varepsilon u\|_{L^2(\mathbb{R}^d)} \le \|f\|_{L^2(Y)}\|u\|_{L^2(\mathbb{R}^d)}.
$$
\end{prop}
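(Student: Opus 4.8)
The statement to prove is Proposition~\ref{prop.steklov-multiple}: for a $1$-periodic $f\in L^2(Y)$ and any $u\in L^2(\mathbb{R}^d)$,
$$
\|f^\varepsilon S_\varepsilon u\|_{L^2(\mathbb{R}^d)} \le \|f\|_{L^2(Y)}\|u\|_{L^2(\mathbb{R}^d)}.
$$
The plan is to compute the $L^2$ norm squared directly, exploiting the fact that $S_\varepsilon u$, being an average of translates of $u$ over the cube $\varepsilon Y$, is essentially constant on each cube of the lattice $\varepsilon(Y+m)$, $m\in\mathbb{Z}^d$ — and more importantly, that its value depends on $x$ only through the ``coarse'' variable, so that on each such cube the oscillation of $f^\varepsilon$ decouples from $S_\varepsilon u$. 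Concretely, by the change of variables $x = \varepsilon(z+m)$ with $z\in Y$, one writes
$$
\|f^\varepsilon S_\varepsilon u\|_{L^2(\mathbb{R}^d)}^2 = \varepsilon^d \sum_{m\in\mathbb{Z}^d} \int_Y |f(z+m)|^2\, |(S_\varepsilon u)(\varepsilon(z+m))|^2\, dz = \varepsilon^d \sum_{m\in\mathbb{Z}^d} \int_Y |f(z)|^2\, |(S_\varepsilon u)(\varepsilon(z+m))|^2\, dz,
$$
using $1$-periodicity of $f$ in the last step.

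The key observation to isolate next is that $(S_\varepsilon u)(\varepsilon(z+m))$, as a function on the cube $\varepsilon(Y+m)$, is not literally constant, but its square, when integrated over $z\in Y$ against $|f(z)|^2$, can be bounded by first summing in $m$. Indeed, for each fixed $z\in Y$,
$$
\sum_{m\in\mathbb{Z}^d} |(S_\varepsilon u)(\varepsilon(z+m))|^2.
$$
By definition $(S_\varepsilon u)(x) = \average_Y u(x-\varepsilon w)\, dw$, so by Jensen's (or Cauchy–Schwarz) inequality $|(S_\varepsilon u)(x)|^2 \le \average_Y |u(x-\varepsilon w)|^2\, dw$. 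Hence, for fixed $z$,
$$
\varepsilon^d \sum_{m\in\mathbb{Z}^d} |(S_\varepsilon u)(\varepsilon(z+m))|^2 \le \varepsilon^d \sum_{m\in\mathbb{Z}^d} \average_Y |u(\varepsilon(z+m) - \varepsilon w)|^2\, dw = \sum_{m\in\mathbb{Z}^d} \int_Y |u(\varepsilon(z+m)-\varepsilon w)|^2\, dw,
$$
and the right-hand side, via the substitution $y = \varepsilon(z+m-w)$ on each summand (Fubini to exchange the finite-in-$w$ integral with the sum in $m$), is exactly the integral of $|u|^2$ over all of $\mathbb{R}^d$: as $m$ ranges over $\mathbb{Z}^d$ and $w$ over $Y$, the sets $\varepsilon(z+m-Y)$ tile $\mathbb{R}^d$ up to measure zero. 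Therefore $\varepsilon^d \sum_{m} |(S_\varepsilon u)(\varepsilon(z+m))|^2 \le \|u\|_{L^2(\mathbb{R}^d)}^2$ for every $z\in Y$.

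Combining the two displays and integrating in $z$:
$$
\|f^\varepsilon S_\varepsilon u\|_{L^2(\mathbb{R}^d)}^2 = \int_Y |f(z)|^2 \left(\varepsilon^d \sum_{m\in\mathbb{Z}^d} |(S_\varepsilon u)(\varepsilon(z+m))|^2\right) dz \le \|u\|_{L^2(\mathbb{R}^d)}^2 \int_Y |f(z)|^2\, dz = \|f\|_{L^2(Y)}^2 \|u\|_{L^2(\mathbb{R}^d)}^2,
$$
which gives the claim after taking square roots. The main technical point — and the only place where care is needed — is the interchange of the sum over $m\in\mathbb{Z}^d$ with the integral over $w\in Y$ and the verification that the family $\{\varepsilon(z+m-Y)\}_{m\in\mathbb{Z}^d}$ covers $\mathbb{R}^d$ with disjoint interiors; this is a routine tiling argument, and all manipulations are justified by Tonelli's theorem since every integrand is nonnegative. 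One may first establish the estimate for $u\in C_c(\mathbb{R}^d)$ (where all sums are finite) and then pass to general $u\in L^2(\mathbb{R}^d)$ by density, though with Tonelli the direct argument already works.
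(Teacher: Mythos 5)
The paper itself does not prove Proposition \ref{prop.steklov-multiple}; it is quoted from the Steklov-smoothing literature (Suslina, Zhikov--Pastukhova), so there is no in-paper argument to compare against line by line. Your self-contained tiling proof is correct: Jensen's inequality for $S_\varepsilon u$, the cell decomposition $\mathbb{R}^d=\bigcup_m\varepsilon(Y+m)$, periodicity of $f$, and the fact that $\{\varepsilon(z+m-Y)\}_{m\in\mathbb{Z}^d}$ tiles $\mathbb{R}^d$ up to measure zero are exactly the right ingredients, and Tonelli justifies every interchange since all integrands are nonnegative. One bookkeeping slip: in your second display the asserted equality $\varepsilon^d\sum_m\average_Y|u(\varepsilon(z+m)-\varepsilon w)|^2\,dw=\sum_m\int_Y|u(\varepsilon(z+m)-\varepsilon w)|^2\,dw$ silently drops the factor $\varepsilon^d$ (since $|Y|=1$, $\average_Y$ and $\int_Y$ coincide); the quantity that equals $\|u\|_{L^2(\mathbb{R}^d)}^2$ after the substitution $y=\varepsilon(z+m-w)$ is the one with $\varepsilon^d$ retained, because $dy=\varepsilon^d\,dw$. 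Your concluding bound $\varepsilon^d\sum_m|(S_\varepsilon u)(\varepsilon(z+m))|^2\le\|u\|_{L^2(\mathbb{R}^d)}^2$ is the correct statement, so the slip does not propagate. For comparison, the shorter standard proof avoids the tiling altogether: after Jensen and Fubini one has $\|f^\varepsilon S_\varepsilon u\|_{L^2(\mathbb{R}^d)}^2\le\average_Y\int_{\mathbb{R}^d}|f(x/\varepsilon)|^2|u(x-\varepsilon w)|^2\,dx\,dw$; translating $x\mapsto x+\varepsilon w$ and using that $\average_Y|f(y+w)|^2\,dw=\int_Y|f|^2$ for every $y$ (periodicity) yields the claim in two lines. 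Your route costs a little more notation but makes the underlying measure-theoretic mechanism completely explicit.
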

We define the $r$-neighborhood of the boundary $\partial\Omega$ by
\begin{equation}\label{def.r-neighborhood}
\aligned
(\partial\Omega)_r &=\{x\in\mathbb{R}^d: \text{dist}(x,\partial\Omega)\le r\},\\
\Omega_r & =\{x\in\Omega: \text{dist}(x,\partial\Omega)\le r\}.
\endaligned
\end{equation}
The following lemma gives us an estimate for integrals near the boundary, see \cite{Suslina1301,Suslina1302} for example. We will use it repeatedly in the following content.
\begin{lemma}\label{lem.near-boundary-integral}
Let $\Omega\subset \mathbb{R}^d$ be a bounded $C^1$ domain.
Then, for any function $u\in H^1(\Omega)$ and for any $0<r\le \textnormal{diam}(\Omega)$,
\begin{equation}\label{ineq.near-boundary-integral-inside}
\left(\int_{\Omega_r} |u|^2 dx\right)^{1/2} \le C \sqrt{r} \|u\|^{1/2}_{H^1(\Omega)}\|u\|^{1/2}_{L^2(\Omega)}.
\end{equation}
Moreover, for any 1-periodic function $f \in L^2(Y)$ and $u\in H^1(\mathbb{R}^d)$,
\begin{equation}\label{ineq.near-boundary-integral-outside}
\left(\int_{(\partial\Omega)_{2\varepsilon}}|f^\varepsilon|^2|S_\varepsilon u|^2 dx\right)^{1/2} \le C\sqrt{\varepsilon}\|f\|^{1/2}_{L^2(Y)}\|u\|^{1/2}_{H^1(\mathbb{R}^d)}\|u\|^{1/2}_{L^2(\mathbb{R}^d)},
\end{equation}
where $C$ depends only on $\Omega$.
\end{lemma}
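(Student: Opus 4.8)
The plan is to establish the two estimates separately, both by reducing to a local boundary chart computation and the fundamental theorem of calculus. For \eqref{ineq.near-boundary-integral-inside}, I would first treat the model case in which $\Omega_r$ is replaced by a slab $\{(x',x_d): x'\in Q', 0<x_d<r\}$ sitting above a flat piece of boundary. For a.e.\ fixed $x'$ and any $0<t<r$, write $u(x',x_d)^2 = u(x',t)^2 + \int_t^{x_d}\frac{\partial}{\partial s}\big(u(x',s)^2\big)\,ds$; integrating this identity in $x_d$ over $(0,r)$ and then in $t$ over $(0,r)$ (and dividing by $r$) gives
$$
\int_0^r |u(x',x_d)|^2\,dx_d \le \frac{1}{r}\int_0^r\!\!\int_0^r |u(x',s)|^2\,ds\,dx_d + 2\int_0^r\!\!\int_0^r |u(x',s)|\,|\partial_s u(x',s)|\,ds\,dx_d.
$$
Integrating in $x'$ and applying Cauchy--Schwarz to the second term yields $\int_{\Omega_r}|u|^2 \le C r^{-1}\int_\Omega |u|^2 + C r\,\|u\|_{L^2(\Omega)}\|\nabla u\|_{L^2(\Omega)}$, actually $\int_{\Omega_r}|u|^2\le C\|u\|_{L^2(\Omega)}^2 + Cr\|u\|_{L^2}\|\nabla u\|_{L^2}$ — but to obtain the sharp power $\sqrt r$ one instead integrates $t$ only over $(0,r)$ while letting $x_d$ range over $(0,r)$ and keeps the first term as $\frac1r\int_{\Omega}|u|^2\cdot r = \int_\Omega|u|^2$; the point is that after the standard partition-of-unity patching of the $C^1$ boundary (finitely many charts, with the $C^1$-norm of the chart maps absorbed into $C$), one gets
$$
\int_{\Omega_r}|u|^2\,dx \le C r\,\|u\|_{H^1(\Omega)}\,\|u\|_{L^2(\Omega)},
$$
using $\|u\|_{L^2(\Omega)}\le \|u\|_{H^1(\Omega)}$ to combine terms; taking square roots gives \eqref{ineq.near-boundary-integral-inside}. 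The only subtlety is choosing the order of integration so that the resulting bound is linear, not quadratic, in $r$; that is the standard trick behind this trace-type inequality.

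For \eqref{ineq.near-boundary-integral-outside}, the idea is to exploit that $S_\varepsilon u$ is smooth at scale $\varepsilon$ while $f^\varepsilon$ oscillates at scale $\varepsilon$, so that on each $\varepsilon$-cube the two essentially decouple. Cover $(\partial\Omega)_{2\varepsilon}$ by the cubes $\varepsilon(z+Y)$, $z\in\mathbb Z^d$, that it meets; since $\partial\Omega$ is $C^1$, the number of such cubes is $O(\varepsilon^{-(d-1)})$ and their union $\widetilde\Omega_\varepsilon$ is contained in $(\partial\Omega)_{C\varepsilon}$. On each such cube $K=\varepsilon(z+Y)$, by the definition \eqref{def.Steklov} of $S_\varepsilon$ and Jensen's inequality, $\|S_\varepsilon u\|_{L^\infty(K)}^2 \le C\varepsilon^{-d}\|u\|_{L^2(\varepsilon(z'+Y))}^2$ for a slightly enlarged cube — more usefully, one bounds
$$
\int_{K}|f^\varepsilon|^2|S_\varepsilon u|^2\,dx \le \Big(\sup_{K}|S_\varepsilon u|^2\Big)\int_K |f^\varepsilon|^2\,dx,
$$
and $\int_K |f^\varepsilon|^2 = \varepsilon^d\|f\|_{L^2(Y)}^2$ by periodicity, so summing over the $O(\varepsilon^{-(d-1)})$ relevant cubes gives $\int_{(\partial\Omega)_{2\varepsilon}}|f^\varepsilon|^2|S_\varepsilon u|^2 \le C\varepsilon\,\|f\|_{L^2(Y)}^2 \cdot \sup_{\widetilde\Omega_{C\varepsilon}}|S_\varepsilon u|^2$. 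This is not quite enough because $S_\varepsilon u$ need not be bounded; instead I would avoid the $L^\infty$ bound and argue as follows. Write $\int_{(\partial\Omega)_{2\varepsilon}}|f^\varepsilon|^2|S_\varepsilon u|^2\,dx$; for $x\in\varepsilon(z+Y)$ one has $(S_\varepsilon u)(x)=\average_Y u(x-\varepsilon w)\,dw$, whose square is $\le \average_Y |u(x-\varepsilon w)|^2\,dw$, a quantity depending on $x$ but averaging $|u|^2$ over the $\varepsilon$-cube $x-\varepsilon Y$. Fubini in $w$ then reduces the integral to $\average_Y \big(\int_{(\partial\Omega)_{2\varepsilon}}|f^\varepsilon(x)|^2\,|u(x-\varepsilon w)|^2\,dx\big)dw$; for each fixed $w$, change variables $y=x-\varepsilon w$, note $|f^\varepsilon(y+\varepsilon w)|^2$ is still $1$-periodic in $y/\varepsilon$ (translate $f$), and the domain of integration becomes a translate of $(\partial\Omega)_{2\varepsilon}$ contained in $(\partial\Omega)_{C\varepsilon}$.

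Now the key point is a local periodicity-averaging estimate: for a $1$-periodic $g\in L^2(Y)$ and any measurable $h\ge 0$ supported in $(\partial\Omega)_{C\varepsilon}$ and slowly varying, $\int g^\varepsilon(y)h(y)\,dy \le C\|g\|_{L^2(Y)}\cdot(\text{something})$. The clean route is: $\int_{(\partial\Omega)_{C\varepsilon}} |g^\varepsilon(y)|^2 |u(y)|^2\,dy$ — but we have already split off $u$, so actually we want $\int |g^\varepsilon|^2 |u(\cdot)|^2$, which is exactly the same type of quantity we started with, now with $u$ in place of $S_\varepsilon u$. So the honest statement is: it suffices to prove, for $1$-periodic $g\in L^2(Y)$ and $u\in H^1(\mathbb R^d)$,
$$
\int_{(\partial\Omega)_{C\varepsilon}} |g^\varepsilon|^2\,|u|^2\,dx \le C\varepsilon\,\|g\|_{L^2(Y)}^2\,\|u\|_{H^1(\mathbb R^d)}\,\|u\|_{L^2(\mathbb R^d)},
$$
and this I would get by the same chart/cube decomposition as in part one combined with the periodic bound $\int_{\varepsilon(z+Y)}|g^\varepsilon|^2 \phi \le \|g\|_{L^2(Y)}^2 \fint_{\varepsilon(z+Y)}\phi\cdot\varepsilon^d$ applied with $\phi=|u|^2$ after using a Poincaré--Sobolev inequality on each cube to replace the average of $|u|^2$ by $\fint|u|^2 + \varepsilon\,(\text{gradient terms})$, then summing over the $O(\varepsilon^{-(d-1)})$ cubes and invoking \eqref{ineq.near-boundary-integral-inside} for the $\fint|u|^2$ contribution. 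The main obstacle is precisely this last step — correctly handling the interaction between the oscillating factor $g^\varepsilon$ and the non-smooth factor $u$ on the thin strip — and the resolution is that replacing $S_\varepsilon u$'s square by a local $L^2$-average of $|u|^2$ (via Jensen) reduces \eqref{ineq.near-boundary-integral-outside} to \eqref{ineq.near-boundary-integral-inside} up to the periodic constant $\|f\|_{L^2(Y)}$ and harmless lower-order terms; the curvature of $\partial\Omega$ enters only through the constant $C$ in the covering by $\varepsilon$-cubes, which is where the $C^1$ hypothesis is used.
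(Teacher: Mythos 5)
The paper does not actually prove this lemma; it is stated and attributed to Suslina's work (the citations right before the lemma), so there is no in-paper argument to compare against. Evaluating your proposal on its own merits, both halves have genuine gaps.

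For \eqref{ineq.near-boundary-integral-inside}, the argument as written is circular. Averaging the identity $u(x',x_d)^2 = u(x',t)^2 + \int_t^{x_d}\partial_s(u^2)\,ds$ over $t\in(0,r)$ makes the first term on the right exactly $\average_0^r u(x',t)^2\,dt$, so after integrating in $x_d\in(0,r)$ the leading contribution is $\int_0^r u(x',x_d)^2\,dx_d$ itself and nothing is proved; your own displayed inequality, once the constants are traced, reduces to $0\le$ (positive quantity). The attempted fix ("integrate $t$ only over $(0,r)$") does not change this and does not produce the needed factor of $r$ in front of $\|u\|_{L^2}^2$. The correct move is to average $t$ over a \emph{fixed-width} slab $(0,R)$ with $R$ depending only on $\Omega$ (and treat $r>R/2$ trivially, since then $\int_{\Omega_r}|u|^2\le\|u\|_{L^2}^2\le CR^{-1}r\|u\|_{L^2}\|u\|_{H^1}$). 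With $t$ ranging over $(0,R)$ one gets $\int_0^r u(x',x_d)^2\,dx_d\le \frac{r}{R}\int_0^R u^2 + 2r\int_0^R|u||\partial_s u|$, and both terms now carry the factor $r$.

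For \eqref{ineq.near-boundary-integral-outside}, the reduction you land on is to a false statement. You propose to prove $\int_{(\partial\Omega)_{C\varepsilon}}|g^\varepsilon|^2|u|^2\le C\varepsilon\|g\|_{L^2(Y)}^2\|u\|_{H^1}\|u\|_{L^2}$ for $1$-periodic $g\in L^2(Y)$ and $u\in H^1(\mathbb R^d)$; but take $g=c\mathbf 1_{B_\rho}$ in $Y$ and $u$ a bump of height $M$ and width $\varepsilon\rho$ centered on $\partial\Omega$ where the rescaled $g$-bump lives. Then the left side is of order $c^2M^2(\varepsilon\rho)^d$ while the right side is of order $\varepsilon\,c^2\rho^d\,M^2(\varepsilon\rho)^{d-1}=c^2M^2(\varepsilon\rho)^d\rho^{d-1}$, so the inequality forces $1\le C\rho^{d-1}$, which fails as $\rho\to0$ for $d\ge 2$. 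The cube-by-cube Poincar\'e--Sobolev argument you sketch would need $g\in L^p(Y)$ for $p>2$, not $g\in L^2(Y)$. The fix is to not split off the $z$-average at all: after Jensen and Fubini you have
$$
\int_{\mathbb R^d}\left(\average_Y \mathbf 1_{(\partial\Omega)_{2\varepsilon}}(y+\varepsilon z)\,|f(y/\varepsilon+z)|^2\,dz\right)|u(y)|^2\,dy,
$$
and for every fixed $y$ the inner average is $\le\average_Y|f(y/\varepsilon+z)|^2\,dz=\|f\|_{L^2(Y)}^2$ by periodicity, and vanishes unless $y\in(\partial\Omega)_{(2+\sqrt d)\varepsilon}$. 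This bounds the whole quantity by $\|f\|_{L^2(Y)}^2\int_{(\partial\Omega)_{C\varepsilon}}|u|^2$, and then \eqref{ineq.near-boundary-integral-inside} (applied to both $\Omega$ and $\mathbb R^d\setminus\overline\Omega$) finishes the proof. In short: you should average over $z$ \emph{before} trying to bound the oscillating factor, so that periodicity of $f$ contributes exactly $\|f\|_{L^2(Y)}^2$ and the product $|g^\varepsilon|^2|u|^2$ never appears.
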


\section{Convergence rates for $u_\varepsilon$ in $H^1$}
\setcounter{equation}{0}

From now on we will assume that
$\Omega$ is a bounded $C^{1,1}$ domain, and $F\in L^2(\Omega; \mathbb{R}^d)$, $g\in H^1(\Omega)$, and $f\in H^{1/2}(\partial\Omega; \mathbb{R}^d)$ satisfy the compatibility condition (\ref{cond.compatibility.Neumann}). We further assume that $\int_\Omega u_\varepsilon=\int_\Omega u_0=0$, and also $\int_\Omega p_\varepsilon=\int_\Omega p_0=0$. We define a linear continuous extension operator $E_\Omega: H^2(\Omega;\mathbb{R}^d) \rightarrow H^2(\mathbb{R}^d;\mathbb{R}^d)$
and for simplicity we denote 
\begin{equation}\label{def.extension-operator}
\widetilde{u}_0=E_\Omega u_0,
\end{equation} 
so that $\widetilde{u}_0=u_0$ in $\Omega$ and
\begin{equation}\label{ineq.extension}
\|\widetilde{u}_0\|_{H^2(\mathbb{R}^d)} \le C\|u_0\|_{H^2(\Omega)},
\end{equation}
where $C$ depends on $\Omega$.  

We define $u_0+\varepsilon \chi^\varepsilon S_\varepsilon \left(\nabla \widetilde{u}_0\right)$ as a first order approximation of $u_\varepsilon$, and let
\begin{equation}\label{def.v}
v_\varepsilon=u_\varepsilon-u_0-\varepsilon \chi^\varepsilon S_\varepsilon \left(\nabla \widetilde{u}_0\right).
\end{equation}
To prove (\ref{ineq.convergence-H1-u.Neumann}), we need to show $\|v_\varepsilon\|_{H^1(\Omega)}\le C\sqrt{\varepsilon}\|u_0\|_{H^2(\Omega)}$. 
\begin{lemma}\label{lem.v-bilinear-form}
Let $\Omega$ be a bounded $C^{1,1}$ domain. Suppose $A$ satisfies ellipticity condition (\ref{cond.ellipticity}) and periodicity condition (\ref{cond.periodicity}). Given $F\in L^2(\Omega;\mathbb{R}^d)$ and $f\in H^{1/2}(\partial\Omega;\mathbb{R}^d)$ satisfying the compatibility condition (\ref{cond.compatibility.Neumann}),  for $g\in H^1(\Omega)$, $(u_\varepsilon,p_\varepsilon)$, $(u_0,p_0)$ are weak solutions of Neumann problems (\ref{def.Stokes.Neumann}) and (\ref{def.Stokes0.Neumann}), respectively. If $\int_\Omega u_\varepsilon=\int_\Omega u_0=0$ and $\int_\Omega p_\varepsilon=\int_\Omega p_0=0$, and $v_\varepsilon$ is defined as in (\ref{def.v}), then $v_\varepsilon$ satisfies the following weak formulation
\begin{equation}\label{eq.lem.v-bilinear-form}
\begin{aligned}
a_\varepsilon(v_\varepsilon,\varphi)&=\int_\Omega [p_\varepsilon-p_0]\,\text{\rm div}(\varphi)-\int_\Omega [\widehat{A}-A^\varepsilon][S_\varepsilon\left(\nabla \widetilde{u}_0\right)-\nabla u_0]\cdot \nabla \varphi\\
&\quad -\varepsilon\int_\Omega A^\varepsilon \chi^\varepsilon S_\varepsilon \left(\nabla^2 \widetilde{u}_0\right)\cdot \nabla \varphi -\int_\Omega B^\varepsilon S_\varepsilon\left(\nabla \widetilde{u}_0\right)\cdot \nabla \varphi, \quad\forall \varphi\in H^1(\Omega;\mathbb{R}^d).
\end{aligned}
\end{equation}
\end{lemma}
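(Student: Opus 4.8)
The plan is to compute the bilinear form $a_\varepsilon(v_\varepsilon,\varphi)$ directly from the definition of $v_\varepsilon$ in (\ref{def.v}) and the weak formulations satisfied by $u_\varepsilon$ and $u_0$. First I would write
$$
a_\varepsilon(v_\varepsilon,\varphi)=a_\varepsilon(u_\varepsilon,\varphi)-a_\varepsilon(u_0,\varphi)-\varepsilon\, a_\varepsilon\big(\chi^\varepsilon S_\varepsilon(\nabla\widetilde u_0),\varphi\big),
$$
and use the Neumann weak formulation (\ref{eq.weak-formulation}) for $(u_\varepsilon,p_\varepsilon)$ together with (\ref{eq.weak-formulation0}) for $(u_0,p_0)$. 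Subtracting these two identities, the right-hand side data $F$ and $f$ cancel exactly, leaving
$$
a_\varepsilon(u_\varepsilon,\varphi)-\int_\Omega\widehat a_{ij}^{\alpha\beta}\frac{\partial u_0^\beta}{\partial x_j}\frac{\partial\varphi^\alpha}{\partial x_i}=\int_\Omega(p_\varepsilon-p_0)\,\text{div}(\varphi),
$$
which already produces the first term on the right of (\ref{eq.lem.v-bilinear-form}). Note this is where the Neumann setting pays off: the boundary contributions disappear without any boundary-layer correction, unlike in the Dirichlet case.

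Next I would rewrite $a_\varepsilon(u_\varepsilon,\varphi)=\int_\Omega A^\varepsilon\nabla u_\varepsilon\cdot\nabla\varphi$ using $\nabla u_\varepsilon=\nabla v_\varepsilon+\nabla u_0+\varepsilon\nabla\big(\chi^\varepsilon S_\varepsilon(\nabla\widetilde u_0)\big)$, and expand the last gradient by the product rule as $(\nabla\chi)^\varepsilon S_\varepsilon(\nabla\widetilde u_0)+\varepsilon\chi^\varepsilon S_\varepsilon(\nabla^2\widetilde u_0)$. The $\varepsilon$-factor on the first piece cancels, so that term contributes $\int_\Omega A^\varepsilon(\nabla\chi)^\varepsilon S_\varepsilon(\nabla\widetilde u_0)\cdot\nabla\varphi$, while the second piece contributes precisely the $\varepsilon\int_\Omega A^\varepsilon\chi^\varepsilon S_\varepsilon(\nabla^2\widetilde u_0)\cdot\nabla\varphi$ term. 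Collecting everything, moving $a_\varepsilon(v_\varepsilon,\varphi)$ to the left and the pressure term over, I am left with matching
$$
\int_\Omega\Big[A^\varepsilon+A^\varepsilon(\nabla\chi)^\varepsilon\Big]S_\varepsilon(\nabla\widetilde u_0)\cdot\nabla\varphi-\int_\Omega\widehat A\,\nabla u_0\cdot\nabla\varphi
$$
against the claimed $-\int_\Omega[\widehat A-A^\varepsilon][S_\varepsilon(\nabla\widetilde u_0)-\nabla u_0]\cdot\nabla\varphi-\int_\Omega B^\varepsilon S_\varepsilon(\nabla\widetilde u_0)\cdot\nabla\varphi$.

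This last algebraic identity is the only real content: it should follow from the definition $b_{ij}^{\alpha\beta}=a_{ij}^{\alpha\beta}+a_{ik}^{\alpha\gamma}\partial_{y_k}\chi_j^{\gamma\beta}-\widehat a_{ij}^{\alpha\beta}$ in (\ref{def.b}), i.e. $B^\varepsilon=A^\varepsilon+A^\varepsilon(\nabla\chi)^\varepsilon-\widehat A$ (reading $B$ as the matrix with these entries), so that $A^\varepsilon+A^\varepsilon(\nabla\chi)^\varepsilon=\widehat A+B^\varepsilon$. Substituting, the left side becomes $\int_\Omega(\widehat A+B^\varepsilon)S_\varepsilon(\nabla\widetilde u_0)\cdot\nabla\varphi-\int_\Omega\widehat A\,\nabla u_0\cdot\nabla\varphi=\int_\Omega\widehat A[S_\varepsilon(\nabla\widetilde u_0)-\nabla u_0]\cdot\nabla\varphi+\int_\Omega B^\varepsilon S_\varepsilon(\nabla\widetilde u_0)\cdot\nabla\varphi$, and one rewrites $\widehat A[S_\varepsilon(\nabla\widetilde u_0)-\nabla u_0]=-[\widehat A-A^\varepsilon][S_\varepsilon(\nabla\widetilde u_0)-\nabla u_0]+A^\varepsilon[S_\varepsilon(\nabla\widetilde u_0)-\nabla u_0]$; however, to land exactly on (\ref{eq.lem.v-bilinear-form}) one must be careful about which terms carry $A^\varepsilon$ versus $\widehat A$, so I would instead track the $A^\varepsilon\nabla u_0$ term separately from the start and group $A^\varepsilon[\nabla u_0 - S_\varepsilon(\nabla\widetilde u_0)]$ with $-\widehat A\nabla u_0+A^\varepsilon(\nabla\chi)^\varepsilon S_\varepsilon(\nabla\widetilde u_0)$. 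The main obstacle is thus purely bookkeeping: keeping the index contractions ($A^\varepsilon$ acting on matrices, the tilde-extension agreeing with $u_0$ only inside $\Omega$, and $S_\varepsilon$ commuting with derivatives) straight so that the four terms on the right of (\ref{eq.lem.v-bilinear-form}) appear with exactly the stated arguments and signs. No estimates or regularity beyond the already-cited weak formulations are needed here — the lemma is an exact identity, and its usefulness for the $O(\sqrt\varepsilon)$ bound comes later when each of the four terms is estimated against $\|u_0\|_{H^2(\Omega)}$.
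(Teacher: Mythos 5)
Your proposal is correct and follows essentially the same route as the paper: subtract the two Neumann weak formulations (the data $F$ and $f$ cancel, producing the pressure term), expand $a_\varepsilon\big(\varepsilon\chi^\varepsilon S_\varepsilon(\nabla\widetilde u_0),\varphi\big)$ by the product rule, and regroup using $B^\varepsilon=A^\varepsilon+A^\varepsilon(\nabla\chi)^\varepsilon-\widehat A$. The only blemish is the intermediate ``matching'' display, where the first $A^\varepsilon$ should act on $\nabla u_0$ rather than on $S_\varepsilon(\nabla\widetilde u_0)$; your own final regrouping via $A^\varepsilon\big[\nabla u_0-S_\varepsilon(\nabla\widetilde u_0)\big]$ repairs exactly this discrepancy, so the identity closes as claimed.
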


\begin{proof}
Since $(u_\varepsilon,p_\varepsilon)$, $(u_0,p_0)$ satisfy the weak formulations (\ref{eq.weak-formulation}) and (\ref{eq.weak-formulation0}), respectively, therefore for any $\varphi\in H^1(\Omega;\mathbb{R}^d)$,
\begin{equation}\label{pf1.v-weak-formulation}
a_\varepsilon(u_\varepsilon,\varphi)-\int_\Omega p_\varepsilon \text{ div}(\varphi) =\int_{\Omega} \widehat{a}_{ij}^{\alpha\beta}\frac{\partial u_0^\beta}{\partial x_j}\frac{\partial \varphi^\alpha}{\partial x_i}-\int_\Omega p_0 \,\text{\rm div}(\varphi).
\end{equation}
By (\ref{pf1.v-weak-formulation}) and the definition of $v_\varepsilon$, we have
\begin{equation}\label{pf2.v-weak-formulation}
\begin{aligned}
a_\varepsilon(v_\varepsilon,\varphi)&=a_\varepsilon(u_\varepsilon,\varphi)-a_\varepsilon(u_0,\varphi)-a_\varepsilon(\varepsilon\chi^\varepsilon S_\varepsilon(\nabla \widetilde{u}_0),\varphi)\\
&=\int_\Omega (p_\varepsilon-p_0)\text{ div}(\varphi)+\int_\Omega \big[\widetilde{a}_{ij}^{\alpha\beta}-a_{ij}^{\alpha\beta}(x/\varepsilon)\big]\frac{\partial u_0^\beta}{\partial x_j}\frac{\partial \varphi^\alpha}{\partial x_i}-a_\varepsilon(\varepsilon\chi^\varepsilon S_\varepsilon(\nabla \widetilde{u}_0),\varphi).
\end{aligned}
\end{equation}
Since 
\begin{equation}
\begin{aligned}
a_\varepsilon(\varepsilon\chi^\varepsilon S_\varepsilon(\nabla \widetilde{u}_0),\varphi)=\int_\Omega a_{ik}^{\alpha\gamma}(x/\varepsilon)\frac{\partial}{\partial x_k}&\left(\varepsilon\chi_j^{\gamma\beta}(x/\varepsilon)\right)S_\varepsilon\frac{\partial \widetilde{u}_0^\beta}{\partial x_j}\frac{\partial \varphi^\alpha}{\partial x_i}\\
&+\varepsilon\int_\Omega a_{ik}^{\alpha\gamma}(x/\varepsilon)\chi_j^{\gamma\beta}(x/\varepsilon)S_\varepsilon\frac{\partial^2 \widetilde{u}_0^\beta}{\partial x_k \partial x_j}\frac{\partial \varphi^\alpha}{\partial x_i},
\end{aligned}
\end{equation}
then by using the definition of $b$, we can obtain that
\begin{equation}\label{pf3.v-weak-formulation}
\begin{aligned}
a_\varepsilon(v_\varepsilon,\varphi)&=\int_\Omega (p_\varepsilon-p_0)\text{ div}(\varphi)-\int_\Omega \Big[\widehat{a}_{ij}^{\alpha\beta}-a_{ij}^{\alpha\beta}(x/\varepsilon)\Big]\bigg[S_\varepsilon\frac{\partial \widetilde{u}_0^\beta}{\partial x_j}-\frac{\partial u_0^\beta}{\partial x_j}\bigg]\frac{\partial \varphi^\alpha}{\partial x_i}\\
&\quad -\varepsilon\int_\Omega a_{ik}^{\alpha\gamma}(x/\varepsilon)\chi_j^{\gamma\beta}(x/\varepsilon)S_\varepsilon\frac{\partial^2 \widetilde{u}_0^\beta}{\partial x_k \partial x_j}\frac{\partial \varphi^\alpha}{\partial x_i}-\int_\Omega b_{ij}^{\alpha\beta}(x/\varepsilon)S_\varepsilon\frac{\partial \widetilde{u}_0^\beta}{\partial x_j}\frac{\partial \varphi^\alpha}{\partial x_i}.\\
\end{aligned}
\end{equation}

\end{proof}
We choose a cut-off function $\theta_\varepsilon(x)$ in $\mathbb{R}^d$ satisfying  the following conditions,
\begin{equation}\label{cutoff1}
\begin{aligned}
& \theta_\varepsilon\in C_0^\infty(\mathbb{R}^d), \quad \text{supp} (\theta_\varepsilon) \subset (\partial \Omega)_\varepsilon,\quad 0\le \theta_\varepsilon(x)\le 1,\\
& \theta_\varepsilon|_{\partial\Omega}=1, \quad |\nabla \theta_\varepsilon|\le \kappa/\varepsilon.
\end{aligned}
\end{equation}
The following lemma is the key intermediate step to prove our convergence results.
\begin{lemma}\label{lem.key-step}
Let $v_\varepsilon$ be defined as in (\ref{def.v}). Then, for any $\varphi\in H^1(\Omega;\mathbb{R}^d)$ we have
\begin{equation}\label{ineq.key-step}
|a_\varepsilon(v_\varepsilon,\varphi)|\le C\|u_0\|_{H^2(\Omega)}\left[\|\text{\rm div}(\varphi)\|_{L^2(\Omega)}+\varepsilon^{1/2}\|\nabla\varphi\|_{L^2(\Omega_{2\varepsilon})}+\varepsilon\|\nabla\varphi\|_{L^2(\Omega)}\right],
\end{equation}
where the constant $C$ depends only on $\mu$, $d$, and $\Omega$.
\end{lemma}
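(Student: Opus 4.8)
The plan is to start from the weak formulation in Lemma~\ref{lem.v-bilinear-form} and estimate each of the four terms on the right-hand side of (\ref{eq.lem.v-bilinear-form}). The first term $\int_\Omega [p_\varepsilon-p_0]\,\text{\rm div}(\varphi)$ is immediate: by the energy estimates (\ref{ineq.energy.Neumann}) and (\ref{ineq.energy0.Neumann}) applied to $(u_\varepsilon,p_\varepsilon)$ and $(u_0,p_0)$, we have $\|p_\varepsilon-p_0\|_{L^2(\Omega)}\le C\{\|F\|_{L^2(\Omega)}+\|g\|_{H^1(\Omega)}+\|f\|_{H^{1/2}(\partial\Omega)}\}\le C\|u_0\|_{H^2(\Omega)}$ (the last bound follows since the data of the homogenized problem is controlled by $\|u_0\|_{H^2(\Omega)}$ via (\ref{def.Stokes0.Neumann})), so this term contributes $C\|u_0\|_{H^2(\Omega)}\|\text{\rm div}(\varphi)\|_{L^2(\Omega)}$. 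The third term $\varepsilon\int_\Omega A^\varepsilon\chi^\varepsilon S_\varepsilon(\nabla^2\widetilde u_0)\cdot\nabla\varphi$ is handled by Cauchy--Schwarz together with Proposition~\ref{prop.steklov-multiple} (since $\chi$ is $1$-periodic and $\chi\in L^2(Y)$): it is bounded by $C\varepsilon\|\chi\|_{L^2(Y)}\|\nabla^2\widetilde u_0\|_{L^2(\mathbb{R}^d)}\|\nabla\varphi\|_{L^2(\Omega)}\le C\varepsilon\|u_0\|_{H^2(\Omega)}\|\nabla\varphi\|_{L^2(\Omega)}$ using the extension bound (\ref{ineq.extension}).

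The second term $\int_\Omega[\widehat A-A^\varepsilon][S_\varepsilon(\nabla\widetilde u_0)-\nabla u_0]\cdot\nabla\varphi$ will be split using $S_\varepsilon(\nabla\widetilde u_0)-\nabla u_0 = S_\varepsilon(\nabla\widetilde u_0)-\nabla\widetilde u_0$ on $\Omega$, whose $L^2(\mathbb{R}^d)$-norm is $\le C\varepsilon\|\nabla^2\widetilde u_0\|_{L^2(\mathbb{R}^d)}\le C\varepsilon\|u_0\|_{H^2(\Omega)}$ by Proposition~\ref{prop.steklov-difference}; since $\widehat A-A^\varepsilon$ is bounded, this term contributes $C\varepsilon\|u_0\|_{H^2(\Omega)}\|\nabla\varphi\|_{L^2(\Omega)}$. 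The crux of the proof is the fourth term $\int_\Omega B^\varepsilon S_\varepsilon(\nabla\widetilde u_0)\cdot\nabla\varphi$, where $B^\varepsilon$ denotes the matrix $(b_{ij}^{\alpha\beta}(x/\varepsilon))$ with mean zero. Here I would use the dual correctors: by Lemma~\ref{lem.dual-correctors}, $b_{ij}^{\alpha\beta}=\partial_{y_k}\Phi_{kij}^{\alpha\beta}+\partial_{y_\alpha}q_{ij}^\beta$, so that $b_{ij}^{\alpha\beta}(x/\varepsilon)=\varepsilon\,\partial_{x_k}[\Phi_{kij}^{\alpha\beta}(x/\varepsilon)]+\varepsilon\,\partial_{x_\alpha}[q_{ij}^\beta(x/\varepsilon)]$. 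Substituting and integrating by parts in $x$, the skew-symmetry $\Phi_{kij}^{\alpha\beta}=-\Phi_{ikj}^{\alpha\beta}$ is used to rewrite the divergence-form part so that one derivative lands harmlessly, and the $q$-part will pair (via $\pi_j^\beta=\partial_{y_i}q_{ij}^\beta$, relation (\ref{eq.q-pi})) against $\text{\rm div}(\varphi)$, producing the $\|\text{\rm div}(\varphi)\|_{L^2(\Omega)}$ contribution. The integration by parts generates three types of terms: (i) interior terms with an extra $\varepsilon$ and with $S_\varepsilon(\nabla\widetilde u_0)$ differentiated, i.e.\ $\varepsilon S_\varepsilon(\nabla^2\widetilde u_0)$, estimated by Cauchy--Schwarz and Proposition~\ref{prop.steklov-multiple} to give $C\varepsilon\|u_0\|_{H^2(\Omega)}\|\nabla\varphi\|_{L^2(\Omega)}$; (ii) a term where $\text{\rm div}(\varphi)$ appears, giving $C\|u_0\|_{H^2(\Omega)}\|\text{\rm div}(\varphi)\|_{L^2(\Omega)}$; and (iii) boundary-layer terms supported near $\partial\Omega$, coming from the cut-off $\theta_\varepsilon$ introduced in (\ref{cutoff1}) which must be inserted to make the integration by parts legitimate on the bounded domain $\Omega$.

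The main obstacle, as usual in these arguments, is controlling the boundary-layer terms of type (iii). The point is that integration by parts of $\varepsilon\partial_{x_k}[\Phi_{kij}^{\alpha\beta}(x/\varepsilon)] S_\varepsilon(\nabla\widetilde u_0)\partial_{x_i}\varphi$ over $\Omega$ without a cut-off would leave an uncontrolled boundary integral over $\partial\Omega$; inserting $\theta_\varepsilon$ (so that one works with $\theta_\varepsilon$ near $\partial\Omega$ and $1-\theta_\varepsilon$ in the interior) replaces that by a volume integral over $(\partial\Omega)_\varepsilon\cap\Omega=\Omega_\varepsilon\subset\Omega_{2\varepsilon}$ in which $|\nabla\theta_\varepsilon|\le\kappa/\varepsilon$ cancels the factor $\varepsilon$, leaving a term of the form $\int_{\Omega_{2\varepsilon}}\Phi^\varepsilon S_\varepsilon(\nabla\widetilde u_0)\nabla\varphi$. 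Applying Cauchy--Schwarz on $\Omega_{2\varepsilon}$ and then the boundary-layer estimate (\ref{ineq.near-boundary-integral-outside}) of Lemma~\ref{lem.near-boundary-integral} to $\Phi^\varepsilon S_\varepsilon(\nabla\widetilde u_0)$ yields a factor $\sqrt\varepsilon\,\|\Phi\|_{L^2(Y)}^{1/2}\|\nabla\widetilde u_0\|_{H^1(\mathbb{R}^d)}^{1/2}\|\nabla\widetilde u_0\|_{L^2(\mathbb{R}^d)}^{1/2}\le C\sqrt\varepsilon\,\|u_0\|_{H^2(\Omega)}$ times $\|\nabla\varphi\|_{L^2(\Omega_{2\varepsilon})}$, which is precisely the middle term in (\ref{ineq.key-step}); the bound (\ref{ineq.dual-corrector-energy}) on $\|\Phi\|_{L^2(Y)}$, $\|q\|_{L^2(Y)}$ is what keeps these constants uniform in $\varepsilon$. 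There is also a boundary-layer contribution where $\theta_\varepsilon$ itself (not its gradient) multiplies an already-$\varepsilon$-small term; that one is absorbed into the $\varepsilon\|\nabla\varphi\|_{L^2(\Omega)}$ (or $\varepsilon^{1/2}\|\nabla\varphi\|_{L^2(\Omega_{2\varepsilon})}$) part. Collecting the four estimates and using (\ref{ineq.extension}) to pass from $\|\widetilde u_0\|_{H^2(\mathbb{R}^d)}$ to $\|u_0\|_{H^2(\Omega)}$ gives (\ref{ineq.key-step}).
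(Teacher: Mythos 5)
Your overall strategy matches the paper's: start from the identity in Lemma~\ref{lem.v-bilinear-form}, substitute the dual correctors of Lemma~\ref{lem.dual-correctors} into the $B^\varepsilon$ term, integrate by parts with the cut-off $\theta_\varepsilon$ of (\ref{cutoff1}), exploit the anti-symmetry $\Phi_{kij}^{\alpha\beta}=-\Phi_{ikj}^{\alpha\beta}$ and the relation $\pi_j^\beta=\partial_{y_i}q_{ij}^\beta$ so that the $q$-part collapses onto $\text{div}(\varphi)$, and control the boundary-layer terms via Lemma~\ref{lem.near-boundary-integral}. The estimates for the second, third, and fourth terms, and the identification of the three contributions $\|\text{div}(\varphi)\|$, $\varepsilon^{1/2}\|\nabla\varphi\|_{L^2(\Omega_{2\varepsilon})}$, $\varepsilon\|\nabla\varphi\|_{L^2(\Omega)}$, are correct.

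There is, however, a genuine gap in your treatment of the first term. You assert
$\|p_\varepsilon-p_0\|_{L^2(\Omega)}\le C\{\|F\|_{L^2(\Omega)}+\|g\|_{H^1(\Omega)}+\|f\|_{H^{1/2}(\partial\Omega)}\}\le C\|u_0\|_{H^2(\Omega)}$,
but the second inequality is false: the data $F=\mathcal{L}_0 u_0+\nabla p_0$ and $f=\frac{\partial u_0}{\partial\nu_0}-p_0\,n$ both involve $p_0$, and $\|p_0\|_{H^1(\Omega)}$ is not controlled by $\|u_0\|_{H^2(\Omega)}$ alone. Indeed, take $u_0\equiv 0$ and any nonconstant $p_0\in H^1(\Omega)$ with $\int_\Omega p_0=0$; then $(u_0,p_0)$ solves (\ref{def.Stokes0.Neumann}) with $F=\nabla p_0\ne 0$, $g=0$, $f=-p_0\,n\ne 0$, yet $\|u_0\|_{H^2(\Omega)}=0$, so the claimed bound fails. (In this example $(u_\varepsilon,p_\varepsilon)=(0,p_0)$, so the \emph{conclusion} $\|p_\varepsilon-p_0\|_{L^2}\le C\|u_0\|_{H^2}$ happens to be trivially true, but your justification does not give it.) The correct route, and the one the paper takes, is to read (\ref{eq.lem.v-bilinear-form}) itself as the weak Neumann--Stokes formulation for the pair $(v_\varepsilon,p_\varepsilon-p_0)$: its right-hand side is $\text{div}(G)$ for a vector field $G$ built from $\chi^\varepsilon$, $b^\varepsilon$ and $S_\varepsilon(\nabla\widetilde u_0)$, its divergence constraint is $\text{div}(v_\varepsilon)=-\varepsilon\chi^\varepsilon S_\varepsilon(\nabla^2\widetilde u_0)$, and its Neumann datum is zero. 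By Propositions~\ref{prop.steklov-difference}--\ref{prop.steklov-multiple} one has $\|G\|_{L^2(\Omega)}\le C\|u_0\|_{H^2(\Omega)}$ and $\|\text{div}(v_\varepsilon)\|_{L^2(\Omega)}\le C\varepsilon\|u_0\|_{H^2(\Omega)}$, and then the energy estimate (\ref{ineq.energy.Neumann}) applied to this difference system, together with $\int_\Omega p_\varepsilon=\int_\Omega p_0=0$, yields $\|p_\varepsilon-p_0\|_{L^2(\Omega)}\le C\|u_0\|_{H^2(\Omega)}$. With that replacement your argument is complete.
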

\begin{proof}
For any $\varphi \in H^1(\Omega;\mathbb{R}^d)$, by Lemma \ref{lem.v-bilinear-form}, we have proved that $v_\varepsilon$ satisfies the weak formulation (\ref{pf3.v-weak-formulation}). By Lemma \ref{lem.dual-correctors}, we may rewrite the last term in RHS of (\ref{pf3.v-weak-formulation}) as
\begin{equation}\label{pf2.key-step}
\begin{aligned}
-\int_\Omega \frac{\partial}{\partial x_k}\left(\varepsilon\Phi_{kij}^{\alpha\beta}(x/\varepsilon)\right)S_\varepsilon\frac{\partial \widetilde{u}_0^\beta}{\partial x_j}\frac{\partial \varphi^\alpha}{\partial x_i}-\int_\Omega \frac{\partial}{\partial x_\alpha}\left(\varepsilon q_{ij}^{\beta}(x/\varepsilon)\right)S_\varepsilon\frac{\partial \widetilde{u}_0^\beta}{\partial x_j}\frac{\partial \varphi^\alpha}{\partial x_i}=R_1+R_2.\\
\end{aligned}
\end{equation}
For the first integral $R_1$, we see that
\begin{equation}\label{pf3.key-step}
\begin{aligned}
R_1=-\varepsilon\int_\Omega \frac{\partial}{\partial x_k}\left(\Phi_{kij}^{\alpha\beta}(x/\varepsilon)S_\varepsilon\frac{\partial \widetilde{u}_0^\beta}{\partial x_j}\right)\frac{\partial \varphi^\alpha}{\partial x_i}+\varepsilon\int_\Omega \Phi_{kij}^{\alpha\beta}(x/\varepsilon)S_\varepsilon\frac{\partial^2 \widetilde{u}_0^\beta}{\partial x_k \partial x_j}\frac{\partial \varphi^\alpha}{\partial x_i}=R_{1a}+R_{1b}.
\end{aligned}
\end{equation}
For the term $R_{1a}$, recall that $\Phi$ is anti-symmetric and $\theta_\varepsilon\equiv 1$ on $\partial\Omega$, therefore
\begin{equation}\label{pf4.key-step}
\begin{aligned}
&R_{1a}=-\varepsilon\int_\Omega \frac{\partial}{\partial x_k}\left(\big[\theta_\varepsilon+1-\theta_\varepsilon\big]\Phi_{kij}^{\alpha\beta}(x/\varepsilon)S_\varepsilon\frac{\partial \widetilde{u}_0^\beta}{\partial x_j}\right)\frac{\partial \varphi^\alpha}{\partial x_i}\\
&=-\varepsilon\int_\Omega \frac{\partial}{\partial x_k}\left(\theta_\varepsilon\Phi_{kij}^{\alpha\beta}(x/\varepsilon)S_\varepsilon\frac{\partial \widetilde{u}_0^\beta}{\partial x_j}\right)\frac{\partial \varphi^\alpha}{\partial x_i}+\varepsilon\int_\Omega \frac{\partial^2}{\partial x_k\partial x_i}\left((1-\theta_\varepsilon)\Phi_{kij}^{\alpha\beta}(x/\varepsilon)S_\varepsilon\frac{\partial \widetilde{u}_0^\beta}{\partial x_j}\right)\varphi^\alpha\\
&=-\varepsilon\int_{\Omega} \frac{\partial}{\partial x_k}\left(\theta_\varepsilon\Phi_{kij}^{\alpha\beta}(x/\varepsilon)S_\varepsilon\frac{\partial \widetilde{u}_0^\beta}{\partial x_j}\right)\frac{\partial \varphi^\alpha}{\partial x_i}.
\end{aligned}
\end{equation}
Similarly for $R_2$, we write it as 
\begin{equation}\label{pf5.key-step}
\begin{aligned}
R_2=-\varepsilon\int_\Omega \frac{\partial}{\partial x_\alpha}\left(q_{ij}^{\beta}(x/\varepsilon)S_\varepsilon\frac{\partial \widetilde{u}_0^\beta}{\partial x_j}\right)\frac{\partial \varphi^\alpha}{\partial x_i}+\varepsilon\int_\Omega q_{ij}^{\beta}(x/\varepsilon)S_\varepsilon\frac{\partial^2 \widetilde{u}_0^\beta}{\partial x_k \partial x_j}\frac{\partial \varphi^\alpha}{\partial x_i}=R_{2a}+R_{2b},
\end{aligned}
\end{equation}
and analogously for the term $R_{2a}$, since $\theta_\varepsilon\equiv 1$ on $\partial\Omega$,
\begin{equation}\label{pf6.key-step}
\begin{aligned}
&R_{2a}=-\varepsilon\int_\Omega \frac{\partial}{\partial x_\alpha}\left(\big[\theta_\varepsilon+1-\theta_\varepsilon\big]q_{ij}^{\beta}(x/\varepsilon)S_\varepsilon\frac{\partial \widetilde{u}_0^\beta}{\partial x_j}\right)\frac{\partial \varphi^\alpha}{\partial x_i}\\
&=-\varepsilon\int_\Omega \frac{\partial}{\partial x_\alpha}\left(\theta_\varepsilon q_{ij}^{\beta}(x/\varepsilon)S_\varepsilon\frac{\partial \widetilde{u}_0^\beta}{\partial x_j}\right)\frac{\partial \varphi^\alpha}{\partial x_i}+\varepsilon\int_\Omega \frac{\partial^2}{\partial x_\alpha\partial x_i}\left((1-\theta_\varepsilon) q_{ij}^{\beta}(x/\varepsilon)S_\varepsilon\frac{\partial \widetilde{u}_0^\beta}{\partial x_j}\right)\varphi^\alpha\\
&=-\varepsilon\int_{\Omega} \frac{\partial}{\partial x_\alpha}\left(\theta_\varepsilon q_{ij}^{\beta}(x/\varepsilon)S_\varepsilon\frac{\partial \widetilde{u}_0^\beta}{\partial x_j}\right)\frac{\partial \varphi^\alpha}{\partial x_i}-\varepsilon\int_\Omega \frac{\partial}{\partial x_i}\left((1-\theta_\varepsilon) q_{ij}^{\beta}(x/\varepsilon)S_\varepsilon\frac{\partial \widetilde{u}_0^\beta}{\partial x_j}\right)\text{div}(\varphi) .
\end{aligned}
\end{equation}
By (\ref{eq.q-pi}), it is more precise that
\begin{equation}\label{pf7.key-step}
\begin{aligned}
&R_{2a}=\int_{\Omega} \bigg[\Big(\varepsilon\frac{\partial \theta_\varepsilon}{\partial x_i}q_{ij}^\beta(x/\varepsilon)-(1-\theta_\varepsilon)\pi_j^\beta(x/\varepsilon)\Big)S_\varepsilon\frac{\partial \widetilde{u}_0^\beta}{\partial x_j}-\varepsilon (1-\theta_\varepsilon)q_{ij}^\beta(x/\varepsilon)S_\varepsilon\frac{\partial^2 \widetilde{u}_0^\beta}{\partial x_i\partial x_j}\bigg]\text{div}(\varphi)\\
&\qquad -\varepsilon\int_{\Omega} \frac{\partial}{\partial x_\alpha}\left(\theta_\varepsilon q_{ij}^{\beta}(x/\varepsilon)S_\varepsilon\frac{\partial \widetilde{u}_0^\beta}{\partial x_j}\right)\frac{\partial \varphi^\alpha}{\partial x_i}.
\end{aligned}
\end{equation}
Therefore, we have updated (\ref{pf3.v-weak-formulation}) as
\begin{equation}\label{eq.I1-plus-I2-plus-I3}
\begin{aligned}
a_\varepsilon(v_\varepsilon,\varphi)
= I_1[\varphi]+I_2[\varphi]+I_3[\varphi],\\
\end{aligned}
\end{equation}
where $I_1$, $I_2$, and $I_3$ are defined as the following,
\begin{equation}\label{def.I1-I2-I3}
\begin{aligned}
& I_1[\varphi]=\int_\Omega \Big[p_\varepsilon-p_0-\big[(1-\theta_\varepsilon)\pi^\varepsilon-\varepsilon\nabla \theta_\varepsilon q^\varepsilon] S_\varepsilon(\nabla\widetilde{u}_0)-\varepsilon (1-\theta_\varepsilon)q^\varepsilon S_\varepsilon(\nabla^2 \widetilde{u}_0)\Big]\text{div} (\varphi);\\
& I_2[\varphi]=\varepsilon\int_\Omega \Big[q^\varepsilon S_\varepsilon(\nabla^2\widetilde{u}_0)\Big]\cdot \nabla \varphi+\varepsilon\int_\Omega \Big[(\Phi^\varepsilon-A^\varepsilon\chi^\varepsilon) S_\varepsilon (\nabla^2\widetilde{u}_0)\Big]\cdot \nabla \varphi\\
&\qquad\quad-\int_\Omega \Big[(\widehat{A}-A^\varepsilon)\big(S_\varepsilon(\nabla \widetilde{u}_0)-\nabla u_0\big)\Big]\cdot\nabla\varphi;\\
& I_3[\varphi]= -\int_{\Omega} \Big[\nabla\big(\varepsilon \theta_\varepsilon \Phi^\varepsilon S_\varepsilon (\nabla\widetilde{u}_0)\big)\Big]\cdot \nabla \varphi-\int_{\Omega} \Big[\nabla\big(\varepsilon \theta_\varepsilon q^\varepsilon S_\varepsilon (\nabla\widetilde{u}_0)\big)\Big]\cdot \nabla \varphi.
\end{aligned}
\end{equation}
To estimate $I_1$, since $(v_\varepsilon, p_\varepsilon-p_0)$ satisfies the Stokes system (\ref{eq.lem.v-bilinear-form}) with data only involves with the gradient of $u_0$, therefore by energy estimate (\ref{ineq.energy.Neumann}), Propositions \ref{prop.steklov-difference}-\ref{prop.steklov-multiple} and the assumption that $\int_\Omega p_\varepsilon=\int_\Omega p_0=0$, we obtain that
\begin{equation}\label{I1.temp1}
\|p_\varepsilon-p_0\|_{L^2(\Omega)}\le C\|u_0\|_{H^2(\Omega)}.
\end{equation}
Since $\pi\in L^2(Y)$, $q\in H^1(Y;\mathbb{R}^d)$ and $|\nabla \theta_\varepsilon|\le \kappa/\varepsilon$, by Proposition \ref{prop.steklov-multiple}, H\"older's inequality and (\ref{I1.temp1}), we have
\begin{equation}\label{I1.key-step}
|I_1[\varphi]|\le C\|u_0\|_{H^2(\Omega)}\|\text{div}(\varphi)\|_{L^2(\Omega)},
\end{equation}
where $C$ depends only on $d$, $\mu$, and $\Omega$.

By Propositions \ref{prop.steklov-difference}-\ref{prop.steklov-multiple}, (\ref{ineq.extension}), and H\"older's inequality, we can obtain that
\begin{equation}\label{I2.key-step}
\begin{aligned}
|I_2[\varphi]|&\le C\varepsilon\left(\big[\|\chi\|_{L^2(Y)}+\|\Phi\|_{L^2(Y)}+\|q\|_{L^2(Y)}+1\big]\|\widetilde{u}_0\|_{H^2(\mathbb{R}^{d})}\right)\|\nabla \varphi\|_{L^2(\Omega)}\\
&\le C\varepsilon\|u_0\|_{H^2(\Omega)}\|\nabla \varphi\|_{L^2(\Omega)}.
\end{aligned}
\end{equation}
For $I_3$, since $\text{supp } \theta_\varepsilon\subset (\partial\Omega)_\varepsilon$ and again by H\"older's inequality, we have
\begin{equation}\label{pf8.key-step}
|I_3[\varphi]|\le C\left(\|\varepsilon\theta_\varepsilon q^\varepsilon S_\varepsilon (\nabla\widetilde{u}_0)\|_{H^{1}(\Omega)}+\|\varepsilon\theta_\varepsilon \Phi^\varepsilon S_\varepsilon (\nabla\widetilde{u}_0)\|_{H^{1}(\Omega)}\right)\|\nabla \varphi\|_{L^2(\Omega_{2\varepsilon})}.\\
\end{equation}
For the first term in the parentheses of (\ref{pf8.key-step}), 
\begin{equation}\label{pf9.key-step}
\begin{aligned}
\varepsilon\|\theta_\varepsilon q^\varepsilon S_\varepsilon (\nabla\widetilde{u}_0)\|_{H^{1}(\Omega)}
&\le C \varepsilon\Big\{ \|q^\varepsilon S_\varepsilon (\nabla \widetilde{u}_0) \|_{L^2(\Omega)}+\|(\nabla \theta_\varepsilon) q^\varepsilon S_\varepsilon (\nabla \widetilde{u}_0)\|_{L^2(\Omega)}\\
&\qquad +\varepsilon^{-1}\|\theta_\varepsilon (\nabla q)^\varepsilon S_\varepsilon (\nabla \widetilde{u}_0) \|_{L^2(\Omega)}+\|q^\varepsilon S_\varepsilon (\nabla^2 \widetilde{u}_0)\|_{L^2(\Omega)} \Big\}\\
&\le C \varepsilon\Big\{ \| \widetilde{u}_0\|_{H^2(\mathbb{R}^d)}
+\varepsilon^{-1} \|q^\varepsilon S_\varepsilon (\nabla \widetilde{u}_0)\|_{L^2((\partial\Omega)_\varepsilon)}\\
&\qquad+\varepsilon^{-1} \|  (\nabla q)^\varepsilon S_\varepsilon (\nabla \widetilde{u}_0) \|_{L^2((\partial\Omega)_\varepsilon)}\Big\}\\
&\le C \varepsilon^{1/2}  \| \widetilde{u}_0\|_{H^2(\mathbb{R}^d)},
\end{aligned}
\end{equation}
where we have used Proposition \ref{prop.steklov-multiple} for the second inequality and Lemma \ref{lem.near-boundary-integral} for the last. Using the same manner, we can show that
\begin{equation}\label{pf10.key-step}
\varepsilon\|\theta_\varepsilon \Phi^\varepsilon S_\varepsilon (\nabla\widetilde{u}_0)\|_{H^{1}(\Omega)}\le C \varepsilon^{1/2}  \| \widetilde{u}_0\|_{H^2(\mathbb{R}^d)}.
\end{equation}
Therefore, by (\ref{ineq.extension}), we have proved that
\begin{equation}\label{I3.key-step}
|I_3[\varphi]|\le C\varepsilon^{1/2}\|u_0\|_{H^2(\Omega)}\|\nabla \varphi\|_{L^2(\Omega_{2\varepsilon})}
\end{equation}
Hence, by combining (\ref{I1.key-step}), (\ref{I2.key-step}) and (\ref{I3.key-step}), we know that
$$
|a_\varepsilon(v_\varepsilon,\varphi)|\le C\|u_0\|_{H^2(\Omega)}\left[\|\text{div}(\varphi)\|_{L^2(\Omega)}+\varepsilon^{1/2}\|\nabla\varphi\|_{L^2(\Omega_{2\varepsilon})}+\varepsilon\|\nabla\varphi\|_{L^2(\Omega)}\right],
$$
where $C$ depends only on $d$, $\mu$, and $\Omega$.
\end{proof}
\begin{proof}[\textbf{Proof of estimate (\ref{ineq.convergence-H1-u.Neumann})}]
We will now prove (\ref{ineq.convergence-H1-u.Neumann}) by energy estimates. Since $\text{div}(u_\varepsilon)=\text{div}(u_0)$, and recall that $\text{div}(\chi)=0$, hence
$$
\text{\rm div}(v_\varepsilon)=-\varepsilon\chi_j^{\alpha\beta}(x/\varepsilon) S_\varepsilon \frac{\partial^2 \widetilde{u}_0^\beta}{\partial x_\alpha\partial x_j}.
$$ 
By Proposition \ref{prop.steklov-multiple}, we can see that
\begin{equation}\label{pf1.convergence-H1-u.Neumann}
\|\text{div}(v_\varepsilon)\|_{L^2(\Omega)} \le C\varepsilon\|u_0\|_{H^2(\Omega)}
\end{equation}
If we choose $\varphi=v_\varepsilon$ itself in Lemma \ref{lem.key-step}, therefore by ellipticity condition (\ref{cond.ellipticity}), (\ref{ineq.key-step}) and (\ref{pf1.convergence-H1-u.Neumann}), we see that 
\begin{equation}\label{pf3.convergence-H1-u.Neumann}
\begin{aligned}
\|\nabla v_\varepsilon\|^2_{L^2(\Omega)} \le C\|u_0\|_{H^2(\Omega)}\left[\varepsilon^{1/2}\|\nabla v_\varepsilon\|_{L^2(\Omega_{2\varepsilon})}+\varepsilon\|\nabla v_\varepsilon\|_{L^2(\Omega)}\right]+C\varepsilon\|u_0\|^2_{H^2(\Omega)}.
\end{aligned}
\end{equation}
By Poincar\'{e}'s inequality, and since $\int_\Omega u_\varepsilon=\int_\Omega u_0=0$, we have
\begin{equation}\label{pf4.convergence-H1-u.Neumann}
\begin{aligned}
\|v_\varepsilon\|_{L^2(\Omega)}&\le \|v_\varepsilon-\average_\Omega v_\varepsilon \|_{L^2(\Omega)}+\|\average_\Omega \left[\varepsilon\chi^\varepsilon S_\varepsilon(\nabla\widetilde{u}_0)\right]\|_{L^2(\Omega)}\\
&\le C\|\nabla v_\varepsilon\|_{L^2(\Omega)}+C\varepsilon\|u_0\|_{H^2(\Omega)},
\end{aligned}
\end{equation}
where we have also used Proposition \ref{prop.steklov-multiple} and (\ref{ineq.extension}) for the last inequality. Finally, by (\ref{pf3.convergence-H1-u.Neumann}), (\ref{pf4.convergence-H1-u.Neumann}) and Cauchy-Schwarz inequality, we proved the desired result
\begin{equation}\label{pf2.convergence-H1-u.Neumann}
\|v_\varepsilon\|_{H^1(\Omega)}\le C\sqrt{\varepsilon}\|u_0\|_{H^2(\Omega)},
\end{equation}
where $C$ depends only on $d$, $\mu$, and $\Omega$.
\end{proof}
\begin{remark}
This key intermediate step (\ref{ineq.key-step}) has a nature advantage in the Neumann boundary value problems, because the boundary integral of the weak formulation no longer contains the pressure term. But as we mentioned earlier, Neumann problems are always more complicated than Dirichlet problems. Hence, with a slight modification, this method is also suitable for the Dirichlet convergence rate problem as we did in \cite{Gu1501}. We need to choose a Dirichlet boundary corrector $(w_\varepsilon,\tau_\varepsilon)$, which satisfies $\mathcal{L}_\varepsilon(w_\varepsilon)+\nabla \tau_\varepsilon=0$, $\text{\rm div}(w_\varepsilon)=\varepsilon\text{\rm div}(\chi^\varepsilon S_\varepsilon \nabla \widetilde{u}_0)$ and $w_\varepsilon|_{\partial\Omega}=\varepsilon\chi^\varepsilon S_\varepsilon \nabla \widetilde{u}_0$. Let $z_\varepsilon=u_\varepsilon-u_0-\varepsilon\chi^\varepsilon S_\varepsilon \nabla \widetilde{u}_0+w_\varepsilon$, since $z_\varepsilon$ belongs to the Hilbert space $V=\{u\in H^1_0(\Omega;\mathbb{R}^d): \text{\rm div}(u)=0\}$, it is not hard to see that 
$|a_\varepsilon(z_\varepsilon,\varphi)|\le C\varepsilon\|u_0\|_{H^2(\Omega)}\|\nabla \varphi\|_{L^2(\Omega)}$, for any $\varphi\in V$. Therefore the $O(\sqrt{\varepsilon})$ rate can be obtained by choosing $\varphi=z_\varepsilon$ and the fact that $\|w_\varepsilon\|_{H^1(\Omega)}\le C\sqrt{\varepsilon}\|u_0\|_{H^2(\Omega)}$. This gives us another aspect of seeing the importance of the choice of boundary corrector $w_\varepsilon$ in \cite{Gu1501}.
\end{remark}
\section{Convergence rates of $p_\varepsilon$ in $L^2$}
\setcounter{equation}{0}
It is well known that if $(u_\varepsilon,p_\varepsilon)\in H^1(\Omega;\mathbb{R}^d)\times L^2(\Omega)$ is a weak solution of any Stokes system (\ref{def.Stokes}), then
\begin{equation}\label{ineq.inherent-estimate}
\|p_\varepsilon-\average_\Omega p_\varepsilon\|_{L^2(\Omega)}\le \|\nabla p_\varepsilon\|_{H^{-1}(\Omega)}\le C\left\{ \|F\|_{H^{-1}(\Omega)}+\|u_\varepsilon\|_{H^1(\Omega)}\right\},
\end{equation}
where $C$ depends only on $d$, $\mu$, and $\Omega$ (see e.g. \cite{Temam77}). 
\begin{proof}[\textbf{Proof of estimate (\ref{ineq.convergence-H1-p.Neumann})}]
Recalling the definition of $b$, we see that
\begin{equation}\label{pf1.convergence-H1-p.Neumann}
\begin{aligned}
&(\mathcal{L}_\varepsilon(v_\varepsilon))^\alpha= -\frac{\partial [p_\varepsilon-p_0]}{\partial x_\alpha}-\frac{\partial}{\partial x_i}\left(\Big[\widehat{a}_{ij}^{\alpha\beta}-a_{ij}^{\alpha\beta}(x/\varepsilon)\Big]\frac{\partial u_0^\beta}{\partial x_j}\right)\\
&\quad+\frac{\partial}{\partial x_i}\left(a_{ik}^{\alpha\gamma}(x/\varepsilon)\frac{\partial}{\partial x_k}\Big[\varepsilon \chi_j^{\gamma\beta}(x/\varepsilon)\Big]S_\varepsilon\frac{\partial \widetilde{u}_0^\beta}{\partial x_j}\right) + \varepsilon\frac{\partial}{\partial x_i}\left(a_{ik}^{\alpha\gamma}(x/\varepsilon)\chi_j^{\gamma\beta}(x/\varepsilon)S_\varepsilon \frac{\partial^2 \tilde{u}_0^\beta}{\partial x_k \partial x_j}\right)\\
&= -\frac{\partial [p_\varepsilon-p_0]}{\partial x_\alpha}+ \frac{\partial}{\partial x_i}\left(\Big[\widehat{a}_{ij}^{\alpha\beta}-a_{ij}^{\alpha\beta}(x/\varepsilon)\Big]\bigg[S_\varepsilon\frac{\partial \widetilde{u}_0^\beta}{\partial x_j}-\frac{\partial u_0^\beta}{\partial x_j}\bigg]\right)\\
&\quad +\varepsilon\frac{\partial}{\partial x_i}\left(a_{ik}^{\alpha\gamma}(x/\varepsilon)\chi_j^{\gamma\beta}(x/\varepsilon)S_\varepsilon \frac{\partial^2 \widetilde{u}_0^\beta}{\partial x_k \partial x_j}\right)+\frac{\partial}{\partial x_i}\left(b_{ij}^{\alpha\beta}(x/\varepsilon)S_\varepsilon\frac{\partial \widetilde{u}_0^\beta}{\partial x_j}\right).\\
\end{aligned}
\end{equation}
Using Lemma \ref{lem.dual-correctors}, we'd split the last term of RHS of (\ref{pf1.convergence-H1-p.Neumann}) into two,
\begin{equation}\label{K1-plus-K2.convergence-H1-p.Neumann}
\aligned
\frac{\partial}{\partial x_i}\left(b_{ij}^{\alpha\beta}(x/\varepsilon)S_\varepsilon\frac{\partial \tilde{u}_0^\beta}{\partial x_j}\right)&=\frac{\partial}{\partial x_i}\left(\bigg[\frac{\partial}{\partial x_k}\Big(\varepsilon \Phi_{kij}^{\alpha\beta}(x/\varepsilon)\Big)+\frac{\partial}{\partial x_\alpha}\Big(\varepsilon q_{ij}^\beta(x/\varepsilon)\Big)\bigg]S_\varepsilon\frac{\partial \widetilde{u}_0^\beta}{\partial x_j}\right)\\
&= K_1+K_2.\\
\endaligned
\end{equation}
Because of the anti-symmetry property $\Phi_{kij}^{\alpha\beta}=-\Phi_{ikj}^{\alpha\beta}$, we see that
$$
\begin{aligned}
K_1&=\frac{\partial^2}{\partial x_i \partial x_k}\left(\varepsilon \Phi_{kij}^{\alpha\beta}(x/\varepsilon)S_\varepsilon\frac{\partial 
\widetilde{u_0}^\beta}{\partial x_j}\right)-\varepsilon\frac{\partial}{\partial x_i}\left(\Phi_{kij}^{\alpha\beta}(x/\varepsilon)S_\varepsilon\frac{\partial^2 \widetilde{u}_0^\beta}{\partial x_j\partial x_k}\right)\\
&=-\varepsilon\frac{\partial}{\partial x_i}\left(\Phi_{kij}^{\alpha\beta}(x/\varepsilon)S_\varepsilon\frac{\partial^2 \widetilde{u}_0^\beta}{\partial x_j\partial x_k}\right).
\end{aligned}
$$
For the second term in the RHS of (\ref{K1-plus-K2.convergence-H1-p.Neumann}), we have
\begin{equation}\label{K2-minus-K3.convergence-H1-p.Neumann}
\begin{aligned}
K_2&= \frac{\partial}{\partial x_\alpha}\left(\frac{\partial}{\partial x_i}\bigg[\varepsilon q_{ij}^\beta(x/\varepsilon)S_\varepsilon\frac{\partial \widetilde{u}_0^\beta}{\partial x_j}\bigg]\right)-\frac{\partial}{\partial x_i}\left(\varepsilon q_{ij}^\beta(x/\varepsilon)S_\varepsilon\frac{\partial^2 \widetilde{u}_0^\beta}{\partial x_\alpha \partial x_j}\right)\\
&=K_3-\frac{\partial}{\partial x_i}\left(\varepsilon q_{ij}^\beta(x/\varepsilon)S_\varepsilon\frac{\partial^2 \widetilde{u}_0^\beta}{\partial x_\alpha \partial x_j}\right).
\end{aligned}
\end{equation}
In view of (\ref{eq.q-pi}), for the first term on the RHS of (\ref{K2-minus-K3.convergence-H1-p.Neumann}), we obtain 
\begin{equation}\label{pf2.convergence-H1-p.Neumann}
 K_3 
 = \frac{\partial}{\partial x_\alpha}\left(\pi_j^\beta(x/\varepsilon) S_\varepsilon\frac{\partial \widetilde{u}_0^\beta}{\partial x_j}\right)+\frac{\partial}{\partial x_\alpha}\left(\varepsilon q_{ij}^\beta(x/\varepsilon)S_\varepsilon\frac{\partial^2 \widetilde{u}_0^\beta}{\partial x_j \partial x_i}\right).
\end{equation}
We have shown that
\begin{equation}\label{pf3.convergence-H1-p.Neumann}
\begin{aligned}
&\left(\mathcal{L}_\varepsilon(v_\varepsilon)\right)^\alpha+\frac{\partial}{\partial x_\alpha}\left(p_\varepsilon-p_0-\pi_j^\beta(x/\varepsilon) S_\varepsilon\frac{\partial \widetilde{u}_0^\beta}{\partial x_j}-\varepsilon q_{ij}^\beta(x/\varepsilon)S_\varepsilon\frac{\partial^2 \widetilde{u}_0^\beta}{\partial x_j \partial x_i}\right)\\
&=\varepsilon\frac{\partial}{\partial x_i}\left(\Big[a_{ij}^{\alpha\gamma}(x/\varepsilon)\chi_k^{\gamma\beta}(x/\varepsilon)-\Phi_{kij}^{\alpha\beta}(x/\varepsilon)\Big]S_\varepsilon\frac{\partial^2 \widetilde{u}_0^\beta}{\partial x_j \partial x_k}\right)\\
&\qquad \qquad 
-\varepsilon\frac{\partial}{\partial x_i}\left(q_{ij}^\beta(x/\varepsilon)S_\varepsilon\frac{\partial^2 \widetilde{u}_0^\beta}{\partial x_\alpha \partial x_j}\right)-\frac{\partial}{\partial x_i}\left(\Big[\widehat{a}_{ij}^{\alpha\beta}-a_{ij}^{\alpha\beta}(x/\varepsilon)\Big]\bigg[\frac{\partial u_0^\beta}{\partial x_j}-S_\varepsilon\frac{\partial \widetilde{u}_0^\beta}{\partial x_j}\bigg]\right).\\
\end{aligned}
\end{equation}
By applying (\ref{ineq.inherent-estimate}) to (\ref{pf3.convergence-H1-p.Neumann}), and since $\int_\Omega p_\varepsilon=\int_\Omega p_0=0$, we see that
\begin{equation}\label{pf4.convergence-H1-p.Neumann}
\begin{aligned}
&\Big\|\Big[p_\varepsilon-p_0-\pi^\varepsilon S_\varepsilon (\nabla\widetilde{u}_0)-\varepsilon q^\varepsilon S_\varepsilon(\nabla^2\widetilde{u}_0)\Big]+\average_\Omega \Big[\pi^\varepsilon S_\varepsilon (\nabla\widetilde{u}_0)+\varepsilon q^\varepsilon S_\varepsilon(\nabla^2\widetilde{u}_0)\Big]\Big\|_{L^2(\Omega)}\\
&\le \Big\|\nabla \Big[p_\varepsilon-p_0-\pi^\varepsilon S_\varepsilon (\nabla\widetilde{u}_0)-\varepsilon q^\varepsilon S_\varepsilon(\nabla^2\widetilde{u}_0)\Big]\Big\|_{H^{-1}(\Omega)}\\
&\le C\|v_\varepsilon\|_{H^1(\Omega)}+C\varepsilon\Big[\|\chi\|_{L^2(Y)}+\|\Phi\|_{L^2(Y)}+\|q\|_{L^2(Y)}+1\Big]\|\widetilde{u}_0\|_{H^2(\mathbb{R}^{d})}\\
&\le C\sqrt{\varepsilon}\|u_0\|_{H^2(\Omega)},
\end{aligned}
\end{equation}
where we have used Propositions \ref{prop.steklov-difference}-\ref{prop.steklov-multiple} for the next to last inequality, (\ref{ineq.convergence-H1-u.Neumann}) and (\ref{ineq.extension}) for the last, and the constant $C$ is independent of $\varepsilon$. By Proposition \ref{prop.steklov-multiple} and (\ref{ineq.extension}), we see that
\begin{equation}\label{pf5.convergence-H1-p.Neumann}
\varepsilon\|q^\varepsilon S_\varepsilon(\nabla^2 \widetilde{u}_0)-\average q^\varepsilon S_\varepsilon(\nabla^2 \widetilde{u}_0)\|_{L^2(\Omega)} 
\le C\varepsilon\|\widetilde{u}_0\|_{H^2(\mathbb{R}^d)} \le C\varepsilon\|u_0\|_{H^2(\Omega)}.
\end{equation}
By combining (\ref{pf4.convergence-H1-p.Neumann}) and (\ref{pf5.convergence-H1-p.Neumann}), we have proved that
$$
\| p_\varepsilon-p_0-\Big[\pi^\varepsilon S_\varepsilon (\nabla\widetilde{u}_0)-\average_\Omega \pi^\varepsilon S_\varepsilon (\nabla\widetilde{u}_0)\Big]\|_{L^2(\Omega)}\le C\sqrt{\varepsilon}\|u_0\|_{H^2(\Omega)}.
$$
\end{proof}

\section{Convergence rates for $u_\varepsilon$ in $L^2$}
\setcounter{equation}{0}

To establish the sharp $O(\varepsilon)$ rate for $u_\varepsilon$ in $L^2$, we realize that
\begin{equation}\label{pf1.convergence-L2.Neumann}
\|u_\varepsilon-u_0\|_{L^2(\Omega)}\le \|v_\varepsilon\|_{L^2(\Omega)}+\varepsilon\|\chi^\varepsilon S_\varepsilon(\nabla\widetilde{u}_0)\|_{L^2(\Omega)}.
\end{equation}
By using Proposition \ref{prop.steklov-multiple} and (\ref{ineq.extension}),
$$
\|\chi^\varepsilon S_\varepsilon(\nabla \widetilde{u}_0)\|_{L^2(\Omega)}\le \|\chi\|_{L^2(Y)}\|\nabla \widetilde{u}_0\|_{L^2(\mathbb{R}^{d})}\le C\|u_0\|_{H^2(\Omega)}.
$$
Thus, the problem has been reduced to the proof of
\begin{equation}\label{pf2.convergence-L2.Neumann}
\|v_\varepsilon\|_{L^2(\Omega)}\le C\varepsilon\|u_0\|_{H^2(\Omega)},
\end{equation}
for which we'd use the duality argument.

\begin{proof}[\textbf{Proof of Theorem \ref{thm.convergence-L2.Neumann}}]

We consider the following duality problems, for any $H\in L^2(\Omega;\mathbb{R}^d)$, let $(\varphi_\varepsilon,\sigma_\varepsilon)\in H^1(\Omega;\mathbb{R}^d) \times L^2(\Omega)$ be the weak solution of the following adjoint Neumann problem of Stokes system
\begin{equation}\label{def.adjoint-Stokes.Neumann}
\left\{
\begin{aligned}
\mathcal{L}^*_\varepsilon (\varphi_\varepsilon)+\nabla \sigma_\varepsilon&=H-\average_\Omega H  & \text{ in }\Omega,\\
\text{div}(\varphi_\varepsilon)&= 0 & \text{ in }\Omega,\\
\left(\frac{\partial \varphi_\varepsilon}{\partial \nu_\varepsilon}\right)^*-\sigma_\varepsilon\cdot n&=0 & \text{ on }\partial \Omega,\\
\end{aligned}
\right.
\end{equation}
and $(\varphi_0,\sigma_0)\in H^2(\Omega;\mathbb{R}^d) \times H^1(\Omega)$ be the weak solution of its corresponding homogenized adjoint problem
\begin{equation}\label{def.adjoint-Stokes0.Neumann}
\left\{
\begin{aligned}
\mathcal{L}^*_0 (\varphi_0)+\nabla \sigma_0&=H-\average_\Omega H  & \text{ in }\Omega,\\
\text{div}(\varphi_0)&= 0 & \text{ in }\Omega,\\
\left(\frac{\partial \varphi_0}{\partial \nu_0}\right)^*-\sigma_0\cdot n &=0 & \text{ on }\partial \Omega,\\
\end{aligned}
\right.
\end{equation}
and satisfying
$$
\int_\Omega \varphi_\varepsilon=\int_\Omega \varphi_0=0, \quad \text{and }\int_\Omega \sigma_\varepsilon=\int_\Omega \sigma_0=0.
$$
Here we have used the notation: $\mathcal{L}^*_\varepsilon =-\text{div} \big(A^*(x/\varepsilon)\nabla\big)$
and $\mathcal{L}_0^* =-\text{div}\big(\widehat{A^*}\nabla \big)$.
We note that Theorem \ref{thm.convergence-H1.Neumann} continues to hold for $\mathcal{L}^{*}_\varepsilon$, as 
$A^*$ satisfies the same conditions as $A$. 
Also, by the $W^{2,2}$ estimates (\ref{ineq.energy0.Neumann}) for Stokes systems with constant coefficients in $C^{1,1}$ domains,
\begin{equation}\label{ineq.duality-energy0}
 \|\varphi_0\|_{H^2(\Omega)} +\|\sigma_0\|_{H^1(\Omega)} \le C \| H\|_{L^2(\Omega)}.
\end{equation}
As a result, we have 
\begin{equation}\label{pf3.convergence-L2.Neumann}
\|\varphi_\varepsilon-\varphi_0-\varepsilon \chi^{*\varepsilon}S_\varepsilon(\nabla\widetilde{\varphi}_0)\|_{H^1(\Omega)}\le C\sqrt{\varepsilon}\|\varphi_0\|_{H^2(\Omega)}\le C\sqrt{\varepsilon}\|H\|_{L^2(\Omega)}.
\end{equation}
where $(\chi^*,\pi^*)$ denotes the correctors associated with adjoint matrix $A^*$. Therefore through dual pairing, and integrating by parts, we obtain
\begin{equation}\label{pf4.convergence-L2.Neumann}
\begin{aligned}
\int_\Omega v_\varepsilon \bigg(H-\average_\Omega H\bigg)&= \langle \mathcal{L}_\varepsilon^*(\varphi_\varepsilon), v_\varepsilon \rangle_{H^{-1}_0(\Omega;\mathbb{R}^d)\times H^1(\Omega;\mathbb{R}^d)} +\int_\Omega v_\varepsilon \nabla \sigma_\varepsilon\\
&=a_\varepsilon(v_\varepsilon,\varphi_\varepsilon)-\int_\Omega \sigma_\varepsilon \text{ div}(v_\varepsilon).\\
\end{aligned}
\end{equation}
By (\ref{pf1.convergence-H1-u.Neumann}) and (\ref{ineq.energy.Neumann}), we know that
\begin{equation}\label{pf5.convergence-L2.Neumann}
\begin{aligned}
\left|\int_\Omega \sigma_\varepsilon \text{ div}(v_\varepsilon)\right| &\le C\|\sigma_\varepsilon\|_{L^2(\Omega)}\|\text{div}(v_\varepsilon)\|_{L^2(\Omega)} \le C\varepsilon\|H\|_{L^2(\Omega)}\|u_0\|_{H^2(\Omega)}.
\end{aligned}
\end{equation}
Since $\text{div}(\varphi_\varepsilon)=0$, then by Lemma \ref{lem.key-step},
\begin{equation}\label{pf6.convergence-L2.Neumann}
\begin{aligned}
|a_\varepsilon(v_\varepsilon,\varphi_\varepsilon)|&\le C\|u_0\|_{H^2(\Omega)}\left[\varepsilon^{1/2}\|\nabla\varphi_\varepsilon\|_{L^2(\Omega_{2\varepsilon})}+\varepsilon\|\nabla\varphi_\varepsilon\|_{L^2(\Omega)}\right]\\
&\le C\varepsilon\|u_0\|_{H^2(\Omega)}\|H\|_{L^2(\Omega)}+C\varepsilon^{1/2}\|u_0\|_{H^2(\Omega)}\|\nabla\varphi_\varepsilon\|_{L^2(\Omega_{2\varepsilon})},
\end{aligned}
\end{equation}
where we have used (\ref{ineq.energy.Neumann}) for the last inequality and $C$ is independent of $\varepsilon$. By triangle inequality,
\begin{equation}\label{pf7.convergence-L2.Neumann}
\begin{aligned}
\|\nabla\varphi_\varepsilon\|_{L^2(\Omega_{2\varepsilon})}&\le \|\nabla \big(\varphi_\varepsilon-\varphi_0-\varepsilon \chi^{*\varepsilon}S_\varepsilon(\nabla\widetilde{\varphi}_0)\big)\|_{L^2(\Omega_{2\varepsilon})}\\
&\quad +\|\nabla \varphi_0\|_{L^2(\Omega_{2\varepsilon})}+\|\nabla \big(\varepsilon \chi^{*\varepsilon}S_\varepsilon(\nabla\widetilde{\varphi}_0)\big)\|_{L^2(\Omega_{2\varepsilon})}.
\end{aligned}
\end{equation}
Directly deriving from (\ref{pf3.convergence-L2.Neumann}), we know that
\begin{equation}\label{pf8-1.convergence-L2.Neumann}
\|\nabla \big(\varphi_\varepsilon-\varphi_0-\varepsilon \chi^{*\varepsilon}S_\varepsilon(\nabla\widetilde{\varphi}_0)\big)\|_{L^2(\Omega_{2\varepsilon})} \le C\sqrt{\varepsilon}\|H\|_{L^2(\Omega)}.
\end{equation}
By using Lemma \ref{lem.near-boundary-integral} and (\ref{ineq.extension}) again, we get
\begin{equation}\label{pf8-2.convergence-L2.Neumann}
\|\nabla \varphi_0\|_{L^2(\Omega_{2\varepsilon})} \le C\left(\varepsilon\|\nabla \varphi_0\|_{H^1(\Omega)}\|\nabla \varphi_0\|_{L^2(\Omega)}\right)^{1/2}\le C\sqrt{\varepsilon}\|\varphi_0\|_{H^2(\Omega)}\le C\sqrt{\varepsilon}\|H\|_{L^2(\Omega)}.
\end{equation}
Similarly as in (\ref{pf9.key-step}), by Lemma \ref{lem.near-boundary-integral}, Proposition \ref{prop.steklov-multiple} and (\ref{ineq.extension}), we have
\begin{equation}\label{pf8-3.convergence-L2.Neumann}
\begin{aligned}
\|\nabla \big(\varepsilon \chi^{*\varepsilon}S_\varepsilon(\nabla\widetilde{\varphi}_0)\big)\|_{L^2(\Omega_{2\varepsilon})}&\le C\bigg\{\|(\nabla \chi^*)^\varepsilon S_\varepsilon (\nabla \widetilde{\varphi}_0)\|_{L^2((\partial\Omega)_{2\varepsilon})}+\varepsilon\|\widetilde{\varphi}_0\|_{H^2(\mathbb{R}^d)}\bigg\}\\
&\le C\sqrt{\varepsilon}\|\varphi_0\|_{H^2(\Omega)}\le C\sqrt{\varepsilon}\|H\|_{L^2(\Omega)}.
\end{aligned}
\end{equation}
Substituting (\ref{pf8-1.convergence-L2.Neumann}), (\ref{pf8-2.convergence-L2.Neumann}) and (\ref{pf8-3.convergence-L2.Neumann}) into (\ref{pf7.convergence-L2.Neumann}), we have proved that
\begin{equation}\label{pf9.convergence-L2.Neumann}
\begin{aligned}
\|\nabla\varphi_\varepsilon\|_{L^2(\Omega_{2\varepsilon})}\le C\sqrt{\varepsilon}\|H\|_{L^2(\Omega)}.
\end{aligned}
\end{equation}
Therefore,
\begin{equation}\label{pf10.convergence-L2.Neumann}
\begin{aligned}
|a_\varepsilon(v_\varepsilon,\varphi_\varepsilon)|&\le C\varepsilon\|u_0\|_{H^2(\Omega)}\|H\|_{L^2(\Omega)}+C\varepsilon^{1/2}\|u_0\|_{H^2(\Omega)}(\varepsilon^{1/2}\|H\|_{L^2(\Omega)})\\
&\le C\varepsilon\|H\|_{L^2(\Omega)}\|u_0\|_{H^2(\Omega)}.
\end{aligned}
\end{equation}
Hence, by using (\ref{pf10.convergence-L2.Neumann}) and (\ref{pf5.convergence-L2.Neumann}), we already proved that for any $H\in L^2(\Omega;\mathbb{R}^d)$
\begin{equation}\label{pf11.convergence-L2.Neumann}
\begin{aligned}
\bigg|\int_\Omega v_\varepsilon \big(H-\average_\Omega H\big)\bigg|&\le |a_\varepsilon(v_\varepsilon,\varphi_\varepsilon)|+\left|\int_\Omega \sigma_\varepsilon \text{ div}(\varphi_\varepsilon)\right|\\ 
&\le C\varepsilon\|H\|_{L^2(\Omega)}\|u_0\|_{H^2(\Omega)}.
\end{aligned}
\end{equation}
Therefore, since $\int_\Omega u_\varepsilon=\int_\Omega u_0=0$,
\begin{equation}\label{pf12.convergence-L2.Neumann}
\begin{aligned}
\left|\int_\Omega v_\varepsilon H\right|&\le \left|\int_\Omega v_\varepsilon \big(H-\average_\Omega H\big)\right|+\left|(\average_\Omega H)\int_\Omega v_\varepsilon \right|\\
&\le C\varepsilon\|H\|_{L^2(\Omega)}\|u_0\|_{H^2(\Omega)}+ C\left|(\average_\Omega H)\int_\Omega \varepsilon\chi^\varepsilon S_\varepsilon(\nabla\widetilde{u}_0)\right|\\
&\le C\varepsilon\|H\|_{L^2(\Omega)}\|u_0\|_{H^2(\Omega)},
\end{aligned}
\end{equation}
where we have used (\ref{pf11.convergence-L2.Neumann}) for the second inequality and Proposition \ref{prop.steklov-multiple} for the last, for any $H\in L^2(\Omega;\mathbb{R}^d)$. 
By duality, this implies
\begin{equation}\label{pf13.convergence-L2.Neumann}
\|v_\varepsilon\|_{L^2(\Omega)}\le C\varepsilon\|u_0\|_{H^2(\Omega)}.
\end{equation}
where $C$ is independent of $\varepsilon$. Therefore we have completed the proof.
\end{proof}

\noindent\textbf{Acknowledgment} The author wants to thank the anonymous referees for their very helpful comments and suggestions.
\bibliographystyle{plain}
\bibliography{Lib}

\providecommand{\bysame}{\leavevmode\hbox to3em{\hrulefill}\thinspace}
\providecommand{\MR}{\relax\ifhmode\unskip\space\fi MR }
\providecommand{\MRhref}[2]{%
  \href{http://www.ams.org/mathscinet-getitem?mr=#1}{#2}
}
\providecommand{\href}[2]{#2}
\begin{thebibliography}{10}

\bibitem{AL8701}
M.~Avellaneda and F.~Lin, \emph{{Compactness methods in the theory of
  homogenization}}, Comm. Pure Appl. Math. \textbf{40} (1987), no.~6, 803--847.

\bibitem{CioranescuDamlamianGriso02}
D.~Cioranescu, A.~Damlamian, and G.~Griso, \emph{{Periodic unfolding and
  homogenization}}, C. R. Math. Acad. Sci. Paris \textbf{335} (2002), no.~1,
  99--104.

\bibitem{CioranescuDamlamianGriso08}
\bysame, \emph{{The periodic unfolding method in homogenization}}, SIAM J.
  Math. Anal. \textbf{40} (2008), no.~4, 1585--1620.

\bibitem{GengShen16}
J.~Geng and Z.~Shen, \emph{{Convergence Rates in Parabolic Homogenization with
  Time-Dependent Periodic Coefficients}}, arXiv:1604.06735 (2016).

\bibitem{GiaquintaModica82}
M.~Giaquinta and G.~Modica, \emph{{Nonlinear systems of the type of the
  stationary Navier-Stokes system}}, J. Reine Angew. Math. \textbf{330} (1982),
  173--214.

\bibitem{Griso04}
G.~Griso, \emph{{Error estimate and unfolding for periodic homogenization}},
  Asymptot. Anal. \textbf{40} (2004), no.~3-4, 269--286.

\bibitem{Griso06}
\bysame, \emph{{Interior error estimate for periodic homogenization}}, Anal.
  Appl. (Singap.) \textbf{4} (2006), no.~1, 61--79.

\bibitem{Gu1501}
S.~Gu, \emph{{Convergence rates in homogenization of Stokes systems}}, J.
  Differential Equations \textbf{260} (2016), no.~7, 5796--5815.

\bibitem{Gu16}
\bysame, \emph{{Homogenization of Stokes Systems with Periodic Coefficients}},
  (2016), Theses and Dissertations--Mathematics. Paper 39.

\bibitem{GuShen15}
S.~Gu and Z.~Shen, \emph{{Homogenization of Stokes systems and uniform
  regularity estimates}}, SIAM J. Math. Anal. \textbf{47} (2015), no.~5,
  4025--4057.

\bibitem{JikovKozlovOleinik94}
V.~V. Jikov, S.~M. Kozlov, and O.~A. Oleynik, \emph{{Homogenization of
  Differential Operators and Integral Functionals}}, Springer-Verlag, Berlin,
  1994.

\bibitem{KenigLinShen12}
C.~E. Kenig, F.~Lin, and Z.~Shen, \emph{{Convergence rates in $L^2$ for
  elliptic homogenization problems}}, Arch. Ration. Mech. Anal. \textbf{203}
  (2012), no.~3, 1009--1036 (English).

\bibitem{KenigLinShen14}
\bysame, \emph{{Periodic homogenization of Green and Neumann functions}}, Comm.
  Pure Appl. Math. \textbf{67} (2014), no.~8, 1219--1262.

\bibitem{KenigShen1102}
C.~E. Kenig and Z.~Shen, \emph{{Homogenization of elliptic boundary value
  problems in Lipschitz domains}}, Math. Ann. \textbf{350} (2011), no.~4,
  867--917 (English).

\bibitem{KenigShen1101}
\bysame, \emph{{Layer potential methods for elliptic homogenization problems}},
  Comm. Pure Appl. Math. \textbf{64} (2011), no.~1, 1--44.

\bibitem{Lions80}
J.-L. Lions, \emph{Asymptotic expansions in perforated media with a periodic
  structure}, Rocky Mountain J. Math. \textbf{10} (1980), no.~1, 125--140.
  \MR{573867}

\bibitem{MoskowVogelius97}
S.~Moskow and M.~Vogelius, \emph{{First-order corrections to the homogenised
  eigenvalues of a periodic composite medium. A convergence proof.}}, Proc.
  Roy. Soc. Edinburgh Sect. A \textbf{127} (1997), no.~6, 1263–1299.

\bibitem{OnofreiVernescu07}
D.~Onofrei and B.~Vernescu, \emph{{Error estimates for periodic homogenization
  with non-smooth coefficients}}, Asymptot. Anal. \textbf{54} (2007), no.~1-2,
  103--123.

\bibitem{PakhninSuslina12}
M.~A. Pakhnin and T.~A. Suslina, \emph{{Homogenization of the elliptic
  Dirichlet problem: error estimates in the ($L^2\rightarrow H^1$)-norm}},
  Funct. Anal. Appl. \textbf{46} (2012), no.~2, 155--159.

\bibitem{PakhninSuslina13}
\bysame, \emph{{Operator error estimates for the homogenization of the elliptic
  Dirichlet problem in a bounded domain}}, St. Petersburg Math. J. \textbf{24}
  (2013), no.~6, 949--976.

\bibitem{Pastukhova06}
S.~E. Pastukhova, \emph{{On some estimates from the homogenization of problems
  in plasticity theory. (Russian)}}, Dokl. Akad. Nauk \textbf{406} (2006),
  no.~5, 604--608.

\bibitem{Shen15}
Z.~Shen, \emph{{Boundary estimates in elliptic homogenization}},
  arXiv:1505.00694 (2015).

\bibitem{ShenZhuge15}
Z.~Shen and J.~Zhuge, \emph{{Convergence rates in periodic homogenization of
  systems of elasticity}}, to appear in Proc. Amer. Math. Soc. (2015).

\bibitem{Suslina1301}
T.~A. Suslina, \emph{{Homogenization of the Dirichlet problem for elliptic
  systems: $L_2$-operator error estimates}}, Mathematika \textbf{59} (2013),
  no.~2, 463--476.

\bibitem{Suslina1302}
\bysame, \emph{{Homogenization of the Neumann problem for elliptic systems with
  periodic coefficients}}, SIAM J. Math. Anal. \textbf{45} (2013), no.~6,
  3453--3493.

\bibitem{Temam77}
R.~Temam, \emph{{Navier-Stokes Equations. Theory and Numerical Analysis.}},
  Studies in Mathematics and its Applications, vol.~2, North-Holland Publishing
  Co., Amsterdam-New York-Oxford, 1977.

\bibitem{ZhikovPastukhova05}
V.~V. Zhikov and S.~E. Pastukhova, \emph{On operator estimates for some
  problems in homogenization theory}, Russ. J. Math. Phys. \textbf{12} (2005),
  no.~4, 515--524.

\end{thebibliography}

\medskip

\begin{flushleft}
Shu Gu,
Department of Mathematics,
The Florida State University,
Tallahassee, FL 32306, USA

E-mail: gu@math.fsu.edu; gushu0329@uky.edu
\end{flushleft}

\medskip

\noindent \today

\end{document}